\theoremstyle{plain}
\newtheorem{theorem}{Theorem}[section]
\newtheorem{proposition}[theorem]{Proposition}
\newtheorem{lemma}[theorem]{Lemma}
\newtheorem{corollary}[theorem]{Corollary}
\theoremstyle{definition}
\newtheorem{definition}[theorem]{Definition}
\newtheorem{example}[theorem]{Example}
\theoremstyle{remark}
\newtheorem{remark}[theorem]{Remark}
\numberwithin{equation}{section}
\newcommand{\mrm}[1]{\mathrm{#1}}
\newcommand{\bN}{\mathbb{N}}
\newcommand{\bP}{\mathbb{Z}_+}
\newcommand{\bQ}{\mathbb{Q}}
\newcommand{\bZ}{\mathbb{Z}}
\newcommand{\tcr}[1]{\textcolor{red}{#1}}
\newcommand{\tcb}[1]{\textcolor{blue}{#1}}
\newcommand{\des}{\mathrm{Des}} 
\newcommand{\suchthat}{\;|\;}
\newcommand{\alpx}{\mathbf{x}}
\newcommand{\alpxm}{\mathbf{x}_{-}}
\newcommand{\alpxp}{\mathbf{x}_{+}}
\newcommand{\backquasi}{\overleftarrow{QR}}
\newcommand{\rev}[1]{\mathrm{rev}(#1)} 
\newcommand{\mc}[1]{\mathcal{#1}} 
\newcommand{\qsym}{\mathrm{QSym}} 
\newcommand{\supp}{\mathrm{Supp}}
\newcommand{\bfc}{{\bf c}}
\newcommand{\stan}{\mathrm{stan}} 
\newcommand{\backslide}{\overleftarrow{\mathfrak{F}}} 
\newcommand{\slide}{\mathfrak{F}} 
\newcommand{\backforest}{\overleftarrow{\mathfrak{P}}} 
\newcommand{\forest}{\mathfrak{P}} 
\newcommand{\Zaug}{\overline{\bZ}}
\newcommand{\Paug}{\overline{\bP}}
\newcommand{\lf}[2]{#1^{[#2]}}         
\newcommand{\injwords}[1]{\mathrm{Inj}(#1)}
\newcommand{\flatten}[1]{\mathrm{flat}(#1)}
\newcommand{\reishi}{\mathrm{RS}}
\newcommand{\pa}{m}
\title{$P$-partitions with flags and back stable quasisymmetric functions}
\author{Philippe Nadeau}
\address{Univ Lyon, Universit\'e Claude Bernard Lyon 1, CNRS UMR
5208, Institut Camille Jordan, 43 blvd. du 11 novembre 1918, F-69622 Villeurbanne cedex, France}
\email{\href{mailto:nadeau@math.univ-lyon1.fr}{nadeau@math.univ-lyon1.fr}}
\author{Vasu Tewari}
\address{Department of Mathematics, University of Hawaii at Manoa, Honolulu, HI 96822, USA}
\email{\href{mailto:vvtewari@math.hawaii.edu}{vvtewari@math.hawaii.edu}}
\thanks{P.~N is partially supported by the project ANR19-CE48-011-01 (COMBIN\'E). V.~T. acknowledges the support from Simons Collaboration Grant \#855592.}
\begin{document}
\begin{abstract}
Stanley's theory of $(P,\omega)$-partitions is a standard tool in combinatorics. 
It can be extended to allow for the presence of a restriction, that is a given maximal value for partitions at each vertex of the poset, as was shown by Assaf and Bergeron. 
Here we present a variation on their approach, which applies more generally. 
The enumerative side of the theory is more naturally expressed in terms of back stable quasisymmetric functions. We study the space of such functions, following the work of Lam, Lee and Shimozono on back stable symmetric functions.
As applications we describe a new basis for the ring of polynomials that we call forest polynomials. 
Additionally we give a signed multiplicity-free expansion for any monomial expressed in the basis of slide polynomials.
\end{abstract}

\maketitle

\section*{Introduction}

The study of $(P,\omega)$-partitions was initiated by Stanley in his thesis \cite{Stanley72} wherein he established his \emph{fundamental theorem} laying the groundwork.
Since then $(P,\omega)$-partitions, and several close cousins thereof, have proven to be very useful in combinatorics. We refer the reader to a recent survey by Gessel \cite{GesSurvey} where the history is nicely recounted; see also \cite[Section 5]{GriRei}.
Several aspects of $(P,\omega)$-partitions manifest themself in the combinatorics of certain generating series $K_{(P,\omega)}$ which turn out to be quasisymmetric functions.
A class that then emerges naturally is when $P$ is a chain, in which case one obtains Gessel's fundamental quasisymmetric functions. An immediate, and pleasant, consequence of Stanley's fundamental theorem is that $K_{(P,\omega)}$ expands \emph{positively} in terms of fundamental quasisymmetric functions, i.e. the expansion involves nonnegative integer coefficients.

A polynomial analogue of the latter was introduced by Assaf and Searles \cite{AS} under the name slide polynomials.
These were then realized by Assaf and Bergeron \cite{AssBer20} in a manner akin to  how fundamental quasisymmetric functions are defined via $(P,\omega)$-partitions.
To this end, the authors in \emph{loc. cit.} introduced the notion of $(P,\omega,\rho)$-partitions, which are $(P,\omega)$-partitions in the traditional sense now subject to upper bound constraints imposed by the \emph{restriction} $\rho$.
The generating series $K_{(P,\omega)}$ from before turn into polynomials $K_{(P,\omega,\rho)}$.
The natural analogue of Stanley's fundamental theorem holds in this restricted setting as well, and Assaf--Bergeron establish that $K_{(P,\omega,\rho)}$ expands positively in terms of slide polynomials when $\rho$ is $\mrm{AB}$-flag; see Section~\ref{sub:restriction} for the precise definition.

The main purpose of this note is to introduce $(P,\Phi)$-partitions, which are a mild generalization of those $(P,\omega,\rho)$-partitions that satisfy the $\mrm{AB}$-flag condition. They form a class that is also easier to deal with, leading to simplified proofs. In particular, we find ourselves in the following desirable setting once more\textemdash{} an analogue of Stanley's fundamental theorem holds and this allows us to describe the set of $(P,\Phi)$-partitions as a disjoint union of $(L,\Phi)$-partition sets as $L$ varies over the set of linear extensions of $P$. This then allows us to express $K_{(P,\Phi)}$ as a sum of the $K_{(L,\Phi)}$, each of which either equals a slide polynomial or $0$. With an eye toward future work, we introduce the basis of forest polynomials.

It turns out that the \emph{back stable} setting is a particularly convenient place to view these results. By allowing our $(P,\Phi)$-partitions to take values in the negative integers, the polynomials $K_{(P,\Phi)}$ become generating series $\overleftarrow{K}_{(P,\Phi)}$ in the alphabet $x_i$ where $i<N$ for some $N\in \bZ$. The $\overleftarrow{K}_{(P,\Phi)}$ naturally live inside the space of \emph{back stable quasisymmetric functions} $\overleftarrow{QR}$, which we define and study in Section~\ref{sub:backstable}. 
In particular we establish that the back stable limits of slide polynomials form a basis for $\overleftarrow{QR}$.

 Finally we give an explicit description expressing a monomial in the basis of slide polynomials. It turns out that the coefficients that arise belong to $\{0,\pm 1\}$. In other words, this expansion is \emph{signed multiplicity-free}.

\section{Combinatorial preliminaries}
\label{sec:preliminaries}

Given a sequence $\bfc=(c_i)_{i\in\bZ}$ of nonnegative integers, we define its \emph{support} $\supp(\bfc)$ to be the set of indices $i$ such that $c_i>0$. 
We call  $\bfc$ an \emph{$\bN$-vector} if  $\supp(\bfc)$ is finite. If $\supp(\bfc)\subseteq [n]\coloneqq \{1,\dots,n\}$ for some positive integer $n$, we occasionally write $\bfc=(c_1,\dots,c_n)$ and refer to $\bfc$ as a \emph{weak composition}. 
For any $\bN$-vector $\bfc$, we let $|\bfc|\coloneqq \sum_{i}c_i$ denote its \emph{weight}.
The finite sequence of positive integers obtained by omitting all $0$s from $\bfc$ is called a \emph{strong composition}.
Henceforth, by composition, we shall always mean strong composition. If $\alpha=(\alpha_1,\dots,\alpha_{\ell(\alpha)})$ is a composition of weight $r\geq 0$, we denote this by $\alpha\vDash r$. Here $\ell(\alpha)$ is the \emph{length} of $\alpha$.
The unique composition of weight and length both $0$ will be denoted by $\varnothing$.
We will need two operations on compositions\textendash{} \emph{concatenation} and \emph{near-concatenation}.
Both take compositions $\alpha=(\alpha_1,\dots,\alpha_{p})$ and $\beta=(\beta_1,\dots,\beta_q)$ as input; the former produces $\alpha \cdot \beta=(\alpha_1,\dots,\alpha_p,\beta_1,\dots,\beta_q)$ whereas the latter produces $(\alpha_1,\dots,\alpha_p+\beta_1,\beta_2,\dots,\beta_q)$.

When writing our $\bN$-vectors $\bfc=(c_i)_{i\in \bZ}$ we use a vertical bar to distinguish the `left half' $(\dots,c_{-1},c_0)$ and the `right half' $(c_1,c_2,\dots)$. As an example, note that $\bfc=(\ldots,0,2|0,1,2,0,2,0,\ldots)$ has $c_{0}=2$ separated from $c_1=0$ by a vertical bar.
We will be particularly interested in $\bN$-vectors supported on $\bZ_+$, in which case we will omit the vertical bar and the zeros left of it.

 We denote the set of words in an alphabet $\mc{A}$ by $\mc{A}^*$. We denote the \emph{length} of any word $w\in \mc{A}^*$ by $\ell(w)$. We let $\epsilon$ denote the \emph{empty word}, i.e. the unique word of length $0$.
A subset of $\mc{A}^*$ which we care about is  $\injwords{\mc{A}}$, the set of \emph{injective words}, i.e. words with  all distinct letters.

We now proceed to define the ordered alphabet that is relevant for our purposes. It is obtained by ``augmenting'' $\bZ$.
Denote the set of positive integers by $\bZ_+$.

\begin{definition}
Let $\Zaug$ be the ordered alphabet with letters $\lf{i}{j}$ where $i\in \bZ$ and $j\in \bP$. We have a linear order on $\bZ\sqcup \Zaug$ given by $i<\lf{i}{1}<\lf{i}{2}<\lf{i}{3}<\cdots<i+1$ for all $i$.
\end{definition}

The order is the lexicographic order on $\bZ\times \bP$, but the notation will serve to highlight the prevalent role played by the first factor. We define the \emph{value} of $\lf{i}{j}$ by $\mathrm{val}(\lf{i}{j})=i$.

There is a natural way to go from $\bZ^*$ to $\injwords{\Zaug}$: given $w$ in $\bZ^*$, one associates a word $W=\stan(w)$  by labeling the occurrences of the same letter $i$ in $w$ from left to right by $\lf{i}{1},\lf{i}{2},\dots$. For instance $w=1221625$ gets mapped to $W=\lf{1}{1}\lf{2}{1}\lf{2}{2}\lf{1}{2}\lf{6}{1}\lf{2}{3}\lf{5}{1}$.
This process is clearly injective, and will be used as our natural embedding
\begin{equation}
\label{eq:Zembedding}
\stan:\bZ^*\hookrightarrow \injwords{\Zaug}.
\end{equation}

\section{Revisiting Stanley and Assaf--Bergeron}
\label{sec:whats_known}

We first recall the setting and main results of the celebrated theory of $(P,\omega)$-partitions due mainly to Stanley. We then explain how this theory extends in the presence of a {\em restriction} by recalling pertinent results of Assaf and Bergeron.

In this section $(P,\leq_P)$ is a finite poset. 
We will interchangeably use $P$ to denote both the poset as well as the set underlying the poset.
 We denote the cover relation by $\prec_P$. 
 
 \begin{definition} \label{def:Ppartition} A \emph{$P$-partition} is a function $f:P\to \bP$ such that $f(u)\geq f(v)$ whenever $u\prec_P v$. 
\end{definition}

\subsection{Stanley's theory of $(P,\omega)$-partitions.}
\label{sub:stanley}

 The starting point is to fix in addition a bijective labeling $\omega:P\to \{1,\ldots,\# P\}$. The pair $(P,\omega)$ then forms a labeled poset. A \emph{$(P,\omega)$-partition} is  a $P$-partition $f$ such that $f(u)>f(v)$ whenever $u\prec_P v$ and $\omega(u)>\omega(v)$. 
 Let $\mrm{Part}(P,\omega)$ denote the set of $(P,\omega)$-partitions.
 
  A \emph{linear extension} $L$ of $P$ is a linear ordering of $P$ extending $\leq_P$. Thus a linear extension is a totally ordered set with $P$ as its underlying set. Let $\mathrm{Lin}(P)$ be the set of  linear extensions of $P$. For example, the $(P_0,\omega_0)$-partitions for the example illustrated in Figure~\ref{fig:small_poset} are the functions $f:P_0\to\bP$ that satisfy $f(a)\geq f(b)$ and $f(c)>f(b)$.
  Throughout this note, we depict strict inequalities using dashed edges in Hasse diagrams.

\begin{figure}[!ht]
\includegraphics[]{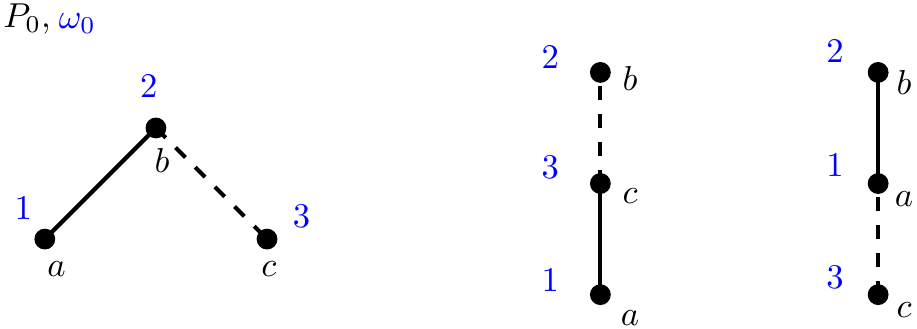}
\caption{
A labeled poset and its two linear extensions.
}
\label{fig:small_poset}
\end{figure}

\begin{theorem}[{Stanley's fundamental theorem \cite[Theorem 6.2]{Stanley72}}]
\label{theo:stanley}
We have
\[
\mrm{Part}(P,\omega)=\displaystyle\bigsqcup_{L\in \mrm{Lin}(P)} \mrm{Part}(L,\omega).
\]
\end{theorem}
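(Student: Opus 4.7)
The plan is to construct, from each $f \in \mrm{Part}(P,\omega)$, a canonical linear extension $L(f) \in \mrm{Lin}(P)$ such that $f \in \mrm{Part}(L(f),\omega)$, and then show that $L(f)$ is uniquely determined by $f$. The reverse inclusion $\mrm{Part}(L,\omega) \subseteq \mrm{Part}(P,\omega)$ for each $L \in \mrm{Lin}(P)$ is essentially definitional: every cover $u \prec_P v$ remains a comparability $u <_L v$ with the same labels, so the $(L,\omega)$-partition inequalities imply the corresponding $(P,\omega)$-partition inequalities.

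Given $f \in \mrm{Part}(P,\omega)$, I would define the total order $L(f)$ on the underlying set $P$ by declaring $u <_{L(f)} v$ whenever either $f(u) > f(v)$, or $f(u) = f(v)$ and $\omega(u) < \omega(v)$. First I would verify that $L(f)$ extends $\leq_P$: given $u \prec_P v$, the $P$-partition condition gives $f(u) \geq f(v)$; if $f(u) > f(v)$ we are done, and otherwise $f(u) = f(v)$, in which case the strict part of the $(P,\omega)$-partition condition forbids $\omega(u) > \omega(v)$, so $\omega(u) < \omega(v)$ and hence $u <_{L(f)} v$. Next I would check $f \in \mrm{Part}(L(f),\omega)$ by examining each cover $u \prec_{L(f)} v$ of the chain: the definition of $L(f)$ immediately yields $f(u) \geq f(v)$, with strict inequality whenever $\omega(u) > \omega(v)$, which is exactly what is required.

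For disjointness, I would observe that any $L$ with $f \in \mrm{Part}(L,\omega)$ must coincide with $L(f)$: reading off consecutive pairs in the chain $L$ shows that $u <_L v$ forces $f(u) > f(v)$, or $f(u) = f(v)$ together with $\omega(u) < \omega(v)$, which is exactly the rule defining $L(f)$. The only real subtlety — and the only place the full $(P,\omega)$-partition hypothesis is used beyond the weaker $P$-partition condition — is the tie-breaking argument showing $L(f)$ extends $\leq_P$ when two comparable elements of $P$ share an $f$-value. Once that step is in place, existence, membership in the claimed $\mrm{Part}(L,\omega)$, and disjointness all fall out at once.
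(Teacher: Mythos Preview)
Your proof is correct and follows the standard argument for Stanley's fundamental theorem. The paper itself does not supply a proof of this statement; it merely quotes the result from \cite[Theorem 6.2]{Stanley72} and illustrates it with the example of Figure~\ref{fig:small_poset}, so there is no in-paper proof to compare against. One small remark: your claim that $\mrm{Part}(L,\omega)\subseteq \mrm{Part}(P,\omega)$ is ``essentially definitional'' skips a tiny step, since a cover $u\prec_P v$ need not be a cover in $L$; but if $\omega(u)>\omega(v)$ then some consecutive pair between $u$ and $v$ in $L$ must also be an $\omega$-descent, forcing the required strict inequality. This is routine and does not affect the validity of your argument.
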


For $(P_0,\omega_0)$ in Figure~\ref{fig:small_poset}, the two linear extensions are shown together with their induced labeling on the right. Theorem~\ref{theo:stanley} says that  $(P_0,\omega_0)$-partitions are the functions $f$ satisfying either $f(a)\geq f(c)>f(b)$ or $f(c)>f(a)\geq f(b)$, as can be directly checked.

Now for any  $(P,\omega)$ one can consider the generating function
\begin{equation}
\label{eq:KPomega}
K_{(P,\omega)}=\sum_f \prod_{u\in P} x_{f(u)}
\end{equation}
where the sum is over the set of all $(P,\omega)$-partitions. 
Then Stanley's theorem has the following expansion as an immediate corollary:
\begin{align}
\label{eq:stanley_fundamental}
K_{(P,\omega)}=\sum_{L\in\mathrm{Lin}(P)}K_{(L,\omega)}.
\end{align}
What makes this particularly useful is that the series $K_{(L,\omega)}$ are precisely the fundamental quasisymmetric functions introduced by Gessel~\cite{Ges84}, as we now detail. 

Recall that a series $f$ in the variables $x_i$ for $i\in I$, with $I$ an interval in  $\bZ$, is {\em quasisymmetric} if for any composition $\alpha=(\alpha_1,\ldots,\alpha_k)$ and any subsets $\{i_1<\cdots<i_k\}$ and $\{j_1<\cdots<j_k\}$ of $I$, the coefficients  of $x_{i_1}^{\alpha_1}\cdots x_{i_k}^{\alpha_k}$ and $x_{j_1}^{\alpha_1}\cdots x_{j_k}^{\alpha_k}$ in $f$ are the same. 

Let $\alpxp$ denote the set of \emph{positively-indexed variables} $\{x_i\suchthat i\in \bZ_{>0}\}$. Given a subset $S$ of $\{1,\ldots,r-1\}$, the \emph{fundamental quasisymmetric function} $F_{r,S}\in\bQ[\![\alpxp]\!]$ is defined by
\begin{align}
\label{eq:fundamental_set}
F_{r,S}=\sum_{\substack{1\leq i_1\leq \cdots \leq i_r\\i_j<i_{j+1} \text{ if }j\in S }}x_{i_1}\cdots x_{i_r}.
\end{align}

For instance 
  $F_{3,\{1\}}=\sum_{1\leq i_1<i_2 \leq i_3}x_{i_1}x_{i_2}x_{i_3}$ and $F_{3,\{2\}}=\sum_{1\leq i_1\leq i_2 < i_3}x_{i_1}x_{i_2}x_{i_3}$. The $F_{r,S}$ for $r\geq 0$ and $S$ a subset of $\{1,\ldots,r-1\}$ form a basis of the space of quasisymmetric functions in $\alpxp$. Let $\alpha=(\alpha_1,\ldots,\alpha_\ell)\vDash r$ be the composition corresponding to $S\subseteq \{1,\ldots,r-1\}$ under the folklore correspondence given by $S=\{\alpha_1,\alpha_1+\alpha_2,\ldots,\alpha_1+\alpha_2+\cdots+\alpha_{\ell-1}\}$. \emph{We will then use freely the notation $F_\alpha$ to denote $F_{r,S}$}. For instance $F_{(1,2)}=F_{3,\{1\}}$ and $F_{(2,1)}=F_{3,\{2\}}$.
  
Now if  $L$ is a chain $v_1\prec_L \cdots \prec_L v_r$ with a labeling $\omega$, define $\des(\omega)=\{ i\in [r-1]\suchthat \omega(v_i)>\omega(v_{i+1})\}$. If we denote the composition of $r$ corresponding to $\des(\omega)$ by $\alpha_\omega$, then it is easily verified that
\[
K_{(L,\omega)}=F_{\rev{\alpha_\omega}}.
\]
Here $\mathrm{rev}$ reverses its input. This shows that the expansion~\eqref{eq:stanley_fundamental} expresses any $K_{(P,\omega)}$ positively in the basis of fundamental quasisymmetric functions. 
For the poset in Figure~\ref{fig:small_poset}, we get the expansion $K_{(P_0,\omega_0)}=F_{(2,1)}+F_{(1,2)}$.

\subsection{Restricted partitions}
\label{sub:restriction}

We now want to constrain $P$-partitions to be dominated by certain fixed values at each element of~$P$. Let $(P,\omega)$ be a labeled poset. 

\begin{definition}
\label{def:restriction}
Fix $\rho:P\to\bP$  a map, called a \emph{restriction}. We define \emph{$(P,\omega,\rho)$-partitions} as those $(P,\omega)$-partitions $f$  that satisfy  $f(u)\leq \rho(u)$ for all $u\in P$. 
\end{definition}

Let $\mathrm{Part}(P,\omega,\rho)$ be the set of all $(P,\omega,\rho)$-partitions. Assaf and Bergeron~\cite{AssBer20} have already considered this extension. We recall some of their results (see Remark~\ref{rem:comparison_AB} for easy comparison). 
First they note that  Theorem~\ref{theo:stanley} adapts immediately:

\begin{theorem}[{\cite[Theorem 3.14]{AssBer20}}]
\label{theo:stanley_restricted}
For any restriction $\rho$, we have 
\[
\mrm{Part}(P,\omega,\rho)=\displaystyle\bigsqcup_{L\in \mrm{Lin}(P)} \mrm{Part}(L,\omega,\rho).
\]
\end{theorem}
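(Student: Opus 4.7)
The plan is to derive this directly from Stanley's fundamental theorem (Theorem~\ref{theo:stanley}) by observing that the restriction condition ``$f(u)\leq \rho(u)$ for all $u\in P$'' is a pointwise condition on the function $f$ itself and does not interact at all with the poset structure. In particular, whether or not $f$ lies in $\mrm{Part}(L,\omega,\rho)$ depends only on the values $f$ takes, together with the labeling $\omega$ and the restriction $\rho$; it does not depend on whether we view $f$ as a function on $P$ or on a linear extension $L$ thereof.

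First I would write
\[
\mrm{Part}(P,\omega,\rho)=\mrm{Part}(P,\omega)\cap \{f:P\to\bP \suchthat f(u)\leq \rho(u)\text{ for all }u\in P\},
\]
and similarly for each linear extension $L$ of $P$,
\[
\mrm{Part}(L,\omega,\rho)=\mrm{Part}(L,\omega)\cap \{f:P\to\bP \suchthat f(u)\leq \rho(u)\text{ for all }u\in P\}.
\]
Then I would apply Theorem~\ref{theo:stanley} to get $\mrm{Part}(P,\omega)=\bigsqcup_{L\in \mrm{Lin}(P)}\mrm{Part}(L,\omega)$, and intersect both sides with the restriction set $\{f:f(u)\leq \rho(u)\text{ for all }u\in P\}$. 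Intersection distributes over disjoint union, so the resulting decomposition is immediate, and disjointness is inherited from the unrestricted decomposition.

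The only thing worth double-checking is that every $(L,\omega,\rho)$-partition is indeed a $(P,\omega,\rho)$-partition; this is because $L$ is a linear extension of $P$, so the defining inequalities of a $(P,\omega)$-partition are a subset of those of an $(L,\omega)$-partition, and the restriction condition is identical on both sides. There is essentially no obstacle here: the statement is a formal corollary of Theorem~\ref{theo:stanley}, with the key observation being that Stanley's bijective argument assigning each $(P,\omega)$-partition to a unique linear extension preserves the values of $f$ pointwise, and thus preserves any pointwise upper bound condition.
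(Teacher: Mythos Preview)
Your proposal is correct and matches the paper's treatment: the paper does not give a proof but simply remarks that Theorem~\ref{theo:stanley} ``adapts immediately,'' citing \cite[Theorem 3.14]{AssBer20}. Your argument---intersecting both sides of Stanley's decomposition with the pointwise restriction set $\{f:f(u)\leq \rho(u)\text{ for all }u\in P\}$ and using that intersection distributes over disjoint union---is precisely the immediate adaptation the authors have in mind.
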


So, for instance, if we impose the restriction $\rho(a)=3$, $\rho(b)=\rho(c)=2$ for the poset $P_0$ in Figure~\ref{fig:small_poset}, then $\mrm{Part}(P_0,\omega,\rho)$ comprises exactly two functions: $f(a)=f(c)=1, f(b)=2$ and $f(c)=1, f(a)=f(b)=2$.
The former comes from the linear extension in the middle and the latter from that on the right.

Definition~\ref{def:restriction} leads to a restricted version of ~\eqref{eq:KPomega} allowing us to define generating functions $K_{(P,\omega,\rho)}$, which are now polynomials, in the obvious manner. 
Theorem~\ref{theo:stanley_restricted} then naturally gives a restricted version of~\eqref{eq:stanley_fundamental} for $K_{(P,\omega,\rho)}$.

Now the issue here is that, when $L$ is a linear order, the family of functions $K_{(L,\omega,\rho)}$ for varying $\omega$ and $\rho$ is too large, and is in particular not free. 
There is however a very natural subfamily that forms a basis of the space of all polynomials, namely the \emph{slide polynomials} \cite{AS} to which we will come back later. A remaining issue is that, for general restrictions $\rho$, the expansion of $K_{(P,\omega,\rho)}$ is not necessarily positive in the slide basis, cf. ~\cite[Example 3.12]{AssBer20}.

Assaf and Bergeron are thus led to consider a restricted set of restrictions $\rho$. 
These are defined as follows:  Say that the restriction $\rho$ is an \emph{AB-flag}  for $(P,\omega)$ if the following conditions hold (cf. \cite[Definition 4.1]{AssBer20}). 
\begin{enumerate}[label=(AB\arabic*)]
\item \label{it:AB1} If $u\prec_P v$, then $\rho(u)\geq \rho(v)$;
\item \label{it:AB2} If $u\prec_P v$ and $\rho(u)>\rho(v)$, then $\omega(u)>\omega(v)$.
\end{enumerate}

Their main result is then the following:
\begin{theorem}
\label{thm:AB_main}
 If $\rho$ is an AB-flag for $(P,\omega)$, the polynomial $K_{(P,\omega,\rho)}$ expands in the slide basis with nonnegative integral coefficients.
 \end{theorem}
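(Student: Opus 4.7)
The natural strategy is to reduce from posets to chains via Theorem~\ref{theo:stanley_restricted}, which gives
\[
K_{(P,\omega,\rho)} = \sum_{L \in \mathrm{Lin}(P)} K_{(L,\omega,\rho)}.
\]
Since the slide polynomials form a basis of the polynomial ring, positivity of the slide expansion of $K_{(P,\omega,\rho)}$ follows once we show that each individual summand $K_{(L,\omega,\rho)}$ is either zero or a single slide polynomial with coefficient $+1$. Integrality would then be automatic, with the coefficient of each slide polynomial counting the linear extensions $L$ producing it.

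For a chain $L: v_1 \prec_L v_2 \prec_L \cdots \prec_L v_r$, the $(L,\omega,\rho)$-partitions are sequences $(f(v_1), \ldots, f(v_r))$ in $\bP^r$ satisfying the weak or strict monotonicity dictated by $\omega$, together with upper bounds $f(v_i) \le \rho(v_i)$. Because the sequence is weakly decreasing (with some strict drops forced by $\omega$), some of the bounds $\rho(v_i)$ may be redundant: if $\rho(v_i)$ exceeds the bound implicitly forced by $\rho(v_j)$ for some $j<i$ (accounting for mandatory strict drops between positions $j$ and $i$), the constraint at $v_i$ can be tightened without altering the partition set. Iteratively applying this reduction produces a ``reduced'' restriction $\widetilde\rho$ along $L$ which is weakly decreasing and satisfies $K_{(L,\omega,\rho)} = K_{(L,\omega,\widetilde\rho)}$; if $\widetilde\rho$ ever falls below $1$, the partition set is empty and $K_{(L,\omega,\rho)}=0$.

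The main obstacle is to show that the AB-flag hypothesis on $(P,\omega)$ forces $\widetilde\rho$ to satisfy the chain analogue of \ref{it:AB2}, namely that every strict decrease $\widetilde\rho(v_i) > \widetilde\rho(v_{i+1})$ along $L$ occurs at an $\omega$-descent $\omega(v_i) > \omega(v_{i+1})$. The subtle point is that a linear extension $L$ typically introduces cover relations $v_i \prec_L v_{i+1}$ that are not covers in $P$, so \ref{it:AB2} for $(P,\omega)$ does not apply directly to them. The plan is to trace any strict drop of $\widetilde\rho$ along $L$ through the reduction procedure back to an honest strict drop $\rho(u) > \rho(v)$ along some $P$-cover $u \prec_P v$; then \ref{it:AB2} yields $\omega(u) > \omega(v)$, and one verifies combinatorially that this descent transfers to an $\omega$-descent at position $i$ in $L$ based on how $u$ and $v$ sit in the linear extension.

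With the chain AB-flag property established for $(L,\omega,\widetilde\rho)$, a direct computation (essentially counting weakly decreasing sequences below a fixed staircase with prescribed strict drops) identifies $K_{(L,\omega,\widetilde\rho)}$ with the slide polynomial $\slide_a$ indexed by the weak composition $a$ recording the strict drops of $\widetilde\rho$ at the $\omega$-descents along $L$. Summing over $L \in \mathrm{Lin}(P)$ then expresses $K_{(P,\omega,\rho)}$ as a sum of slide polynomials with nonnegative integer multiplicities, yielding the theorem.
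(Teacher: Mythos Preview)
Your central claim---that for every linear extension $L$ the polynomial $K_{(L,\omega,\rho)}$ is either zero or a single slide polynomial---is false, and this is exactly the obstacle the paper flags when it says ``the AB-flag property alone does not transfer to linear extensions as can be easily seen.'' Take $P=\{a,b,c\}$ with the single cover $a\prec_P b$ (and $c$ incomparable to both), $\omega(a)=1$, $\omega(c)=2$, $\omega(b)=3$, and $\rho(a)=\rho(b)=5$, $\rho(c)=3$. This is an AB-flag: the only cover is $a\prec_P b$, where $\rho$ is constant, so \ref{it:AB1} and \ref{it:AB2} are vacuous. For the linear extension $L:a\prec_L c\prec_L b$ there are no $\omega$-descents, and one computes
\[
K_{(L,\omega,\rho)}=\sum_{\substack{5\ge i_1\ge i_2\ge i_3\ge 1\\ i_2\le 3}} x_{i_1}x_{i_2}x_{i_3}
=\slide_{00201}+\slide_{003}-\slide_{021},
\]
which is neither a single slide nor slide-positive. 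Your reduced restriction here is $\widetilde\rho=(5,3,3)$, with a strict drop at position $1$ that is \emph{not} an $\omega$-descent; the ``trace back to a $P$-cover'' step has nowhere to go because $a$ and $c$ are incomparable in $P$, so the AB conditions say nothing about them.

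The paper's remedy is to repackage $(\omega,\rho)$ as a single injective map $\Phi:P\to\Paug$ via $\Phi(u)=\lf{\rho(u)}{\omega(u)}$, and to show $\mathrm{Part}(P,\omega,\rho)=\mathrm{Part}(P,\Phi)$. Since $\Phi$ is just a function on vertices it restricts to any linear extension $L$ without alteration, and one then proves that each $K_{(L,\Phi)}$ is a slide polynomial or zero via the Reiner--Shimozono reduction. The point is that the decomposition $K_{(P,\omega,\rho)}=\sum_L K_{(L,\Phi)}$ is genuinely different term-by-term from $\sum_L K_{(L,\omega,\rho)}$: in the example above the three linear extensions contribute $\slide_{00201}$, $\slide_{00102}$, $\slide_{003}$ under $\Phi$, whereas under $(\omega,\rho)$ the first extension already fails. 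The passage to $\Phi$ effectively reorders the strictness data using $\rho$ as the primary key, which is what makes the chain case tractable.
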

 We postpone the definition of slide polynomials to the next section, preferring to define them via a seemingly different perspective than their original definition \cite[Definition 3.6]{AS}. \smallskip

 In order to prove Theorem~\ref{thm:AB_main}, Assaf and Bergeron modify $\rho$ to have it satisfy in addition a certain ``well-labeled'' restriction; see \cite[Section 4]{AssBer20}.
This modification is needed because the AB-flag property alone does not transfer to linear extensions as can be easily seen. 
As Assaf and Bergeron demonstrate in \cite[Proposition 4.3]{AssBer20}, the set of  flagged $(P,\rho)$-partitions remains unchanged in spite of these modifications.
To assist the reader following their proof, we note some key aspects. They employ the fact that if one takes the Hasse diagram of any $(P,\omega)$ and removes all strict edges, then the resulting connected components inherit an ordering from $P$. Note that they start with an AB-flag but at this stage this plays no role since a constant AB-flag puts no restriction on $\omega$. 
As the example in Figure~\ref{fig:counter_ex} shows, we can in fact get two connected components that are not ordered.
That being noted, the removal of strict edges does indeed give a preorder on the connected components. 
Now if one considers the induced order, then the AB-flag property forces all elements of these supercomponents to have the same $\rho$-values, and the remainder of their proof goes through.
\begin{figure}[!h]
\includegraphics[scale=0.6]{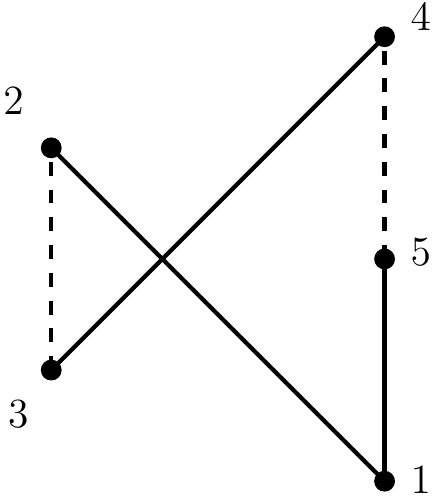}
\caption{}
\label{fig:counter_ex}
\end{figure}

In the next section, we will find a simpler condition on restrictions, which  at the same time is more general than AB-flags and for which (positivity of) the  slide expansion will follow immediately.

\begin{remark}
\label{rem:comparison_AB}
Our setup differs slightly from \cite{AssBer20}.
 First, our $P$-partitions are order-reversing instead of order-preserving. To go between the two formulations, one simply has to go from a poset $(P,\leq_P)$ to its opposite $(P,\geq_P)$.  
Also, the  function $\omega$ is implicit in their work, since they consider the set underlying  $P$ to be $\{1,\ldots,\# P\}$ already. This allows for compact notation, but is not well adapted to having different labelings on a given abstract poset. Furthermore, the forthcoming notion of $(P,\Phi)$-partitions would be cumbersome to use in this setting.
\end{remark}

\section{$(P,\Phi)$-partitions}
\label{sec:whats_new}

We now define our notion of $(P,\omega)$-partitions with restrictions.
 Recall the ordered alphabet $\Paug$ from Section~\ref{sec:preliminaries}, given by letters $\lf{i}{j}$ with $i,j\in\bP$. 

\begin{definition}[Labeled flag $\Phi$]
\label{def:P_phi}
Let $P$ be a poset. A \emph{labeled flag} on $P$  is an \emph{injective} function $\Phi:P\to \Paug$.
\end{definition}

\begin{definition}[$(P,\Phi)$-partitions]
\label{def:P_phi_partition}
Let $(P,\Phi)$ be a poset with a labeled flag. A \emph{$(P,\Phi)$-partition} is a function $f:P\to \bZ_+$ such that for any $u,v\in P$:
\begin{itemize}
\item $f(u)\geq f(v)$ if $u\prec_P v$;
\item $f(u)>f(v)$ if $u\prec_P v$ and $\Phi(u)>\Phi(v)$;
\item $f(u)\leq \mathrm{val}(\Phi(u))$.
\end{itemize} 
\end{definition}
We denote the set of all $(P,\Phi)$-partitions by $\mathrm{Part}(P,\Phi)$. An example is given in Figure~\ref{fig:counter_example}.
On the left is the Hasse diagram of the \emph{diamond} poset with the labeled flag in blue. On the right are the five $(P,\Phi)$-partitions. The $e$-labels on the covers may be momentarily ignored.
\begin{figure}[!h]
\includegraphics[scale=0.65]{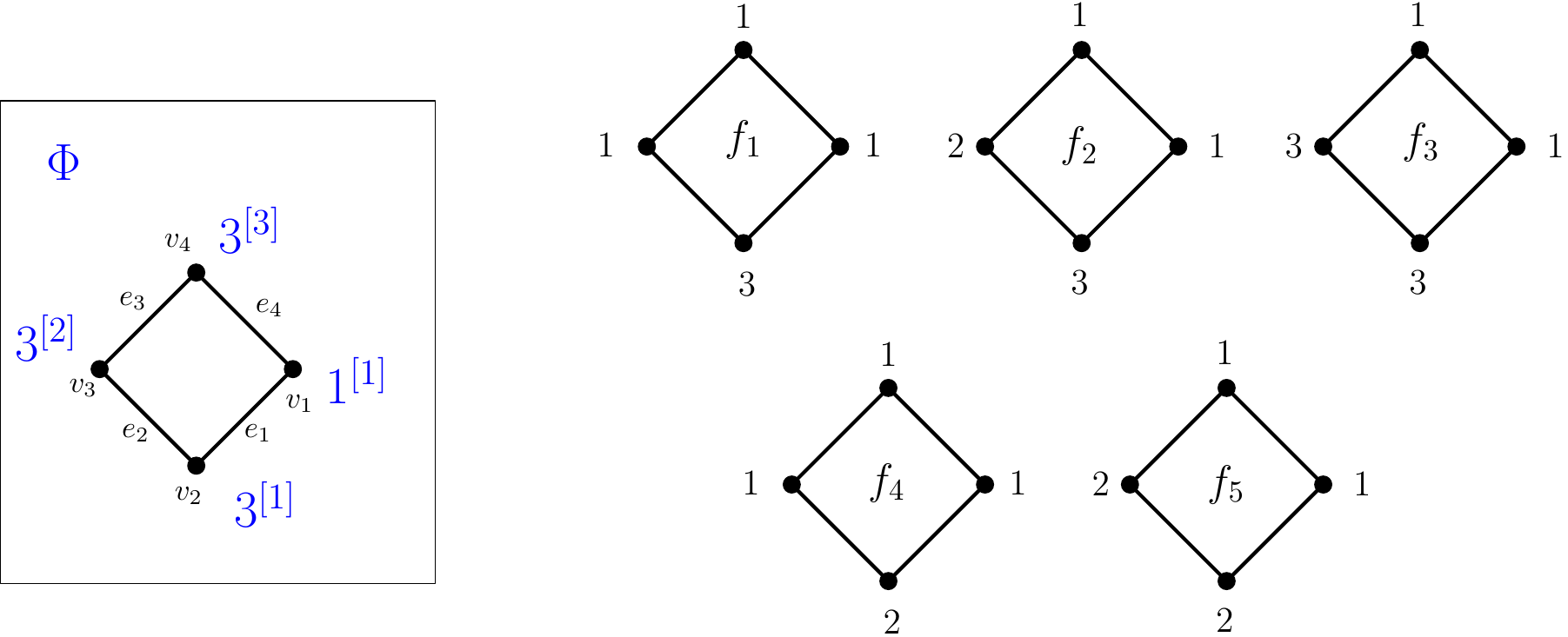}
\caption{A poset with labeled flag $(P,\Phi)$ and its five $(P,\Phi)$-partitions}
\label{fig:counter_example}
\end{figure}

Let us note immediately that the notion of $(P,\Phi)$-partitions is simply a concise way to encode a class of $(P,\omega,\rho)$-partitions with a certain condition on $\rho$. Namely, say that $\rho$ is an \emph{LF-flag} if it satisfies 
\begin{enumerate}[label=(LF)]
\item \label{it:LF} For any  $u,v\in P$, $\rho(u)> \rho(v)$ implies $\omega(u)>\omega(v)$.
\end{enumerate}

\begin{proposition}
\label{prop:Pphi_as_Pomegarho}
Fix a poset $P$.  Then the following hold.
\begin{enumerate}[label=\emph{(\arabic*)}]
\item \label{it:part_un} If $\Phi$ is a labeled flag, then $\mathrm{Part}(P,\Phi)=\mathrm{Part}(P,\omega,\rho)$ for some $\omega,\rho$ satisfying \emph{\ref{it:LF}}.
\item \label{it:part_deux} Conversely, for any $\omega,\rho$ satisfying \emph{\ref{it:LF}}, there exists $\Phi$ such that $\mathrm{Part}(P,\Phi)=\mathrm{Part}(P,\omega,\rho)$.
\end{enumerate}
\end{proposition}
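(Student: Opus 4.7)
The proposition is essentially an unpacking of definitions, and the main observation is that the lex order on $\overline{\mathbb{P}}$ decomposes cleanly according to the first coordinate $\mathrm{val}$. My plan is to give an explicit construction in each direction and then verify that the three defining conditions of a $(P,\Phi)$-partition match the three defining conditions of a $(P,\omega,\rho)$-partition pointwise.

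For part \ref{it:part_un}, given a labeled flag $\Phi$, I would set $\rho(u):=\mathrm{val}(\Phi(u))$ and take $\omega$ to be the unique order-preserving bijection from $(P,\Phi)$ to $(\{1,\ldots,\#P\},<)$; that is, $\omega(u)<\omega(v)$ iff $\Phi(u)<\Phi(v)$. This is well defined since $\Phi$ is injective. Condition \ref{it:LF} then follows immediately: if $\rho(u)>\rho(v)$ then $\Phi(u)>\Phi(v)$ by the definition of the lex order, so $\omega(u)>\omega(v)$. The equality $\mathrm{Part}(P,\Phi)=\mathrm{Part}(P,\omega,\rho)$ then reduces to checking, for every cover $u\prec_P v$, that $\omega(u)>\omega(v)$ iff $\Phi(u)>\Phi(v)$, which holds by construction; the upper bound conditions match because $\rho(u)=\mathrm{val}(\Phi(u))$.

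For part \ref{it:part_deux}, given $\omega$ and $\rho$ satisfying \ref{it:LF}, I would define $\Phi(u):=\rho(u)^{[j_u]}$, where within each fiber $\rho^{-1}(k)$ the superscripts $j_u\in\mathbb{P}$ are assigned so that $j_u<j_v$ iff $\omega(u)<\omega(v)$ (i.e., rank the elements of the fiber by $\omega$). This $\Phi$ is injective, and $\mathrm{val}\circ\Phi=\rho$ by definition. To check that $\Phi(u)>\Phi(v)$ iff $\omega(u)>\omega(v)$ for all $u,v$ (in particular for covers), split into the cases $\rho(u)>\rho(v)$, $\rho(u)<\rho(v)$, and $\rho(u)=\rho(v)$: the first two are handled by \ref{it:LF} together with the first-coordinate dominance of the lex order, while the third is precisely how we chose the $j_u$. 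The equality of the two partition sets then follows as in part \ref{it:part_un}.

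There is no real obstacle here; the only point requiring care is checking that condition \ref{it:LF} is exactly what is needed to make the fiber-by-fiber choice of second coordinates in part~\ref{it:part_deux} compatible with $\omega$, and conversely that our construction of $\omega$ in part~\ref{it:part_un} automatically forces \ref{it:LF}. Since the two partition conditions on covers only compare $\Phi$-values (resp. $\omega$-values) of comparable elements, the constructions above suffice.
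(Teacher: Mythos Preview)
Your proposal is correct and follows essentially the same approach as the paper: the constructions of $\omega$ and $\rho$ in part~\ref{it:part_un} are identical, and for part~\ref{it:part_deux} the paper simply takes $\Phi(u)=\lf{\rho(u)}{\omega(u)}$ rather than your fiber-by-fiber ranking, but the verification that $\Phi(u)>\Phi(v)\iff\omega(u)>\omega(v)$ proceeds by the same three-case split using \ref{it:LF}.
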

\begin{proof}
 For~\emph{\ref{it:part_un}}, define $\omega=\omega(\Phi)$ as the unique bijection $P\to\{1,\ldots,\#P\}$ that satisfies $\omega(u)<\omega(v)$ if and only if $\Phi(u)<\Phi(v)$. Define also $\rho=\rho(\Phi)$ by $\rho(u)=\mathrm{val}(\Phi(u))$.
 
 We first claim that $\rho$ is LF-flag. Indeed, we have $\rho(u)>\rho(v)$ if and only if $\mrm{val}(\Phi(u))>\mrm{val}(\Phi(v))$, which in turn implies that $\Phi(u)>\Phi(v)$. By the definition of $\omega$ this immediately yields $\omega(u)>\omega(v)$.
 
 We now claim that $\mrm{Part}(P,\Phi)=\mrm{Part}(P,\omega,\rho)$. Say $f\in \mrm{Part}(P,\Phi)$. Then we know that $f(u)\leq \mrm{val}(\Phi(u))=\rho(u)$ for all $u\in P$. Furthermore, $u\prec_P v$ implies $f(u)\geq f(v)$. Finally $u\prec_P v$ and $\Phi(u)>\Phi(v)$ implies $f(u)>f(v)$. Now note that $\Phi(u)>\Phi(v)\Longleftrightarrow \omega(u)>\omega(v)$. It follows that $f\in \mrm{Part}(P,\omega,\rho)$. The inclusion $\mrm{Part}(P,\omega,\rho)\subseteq \mrm{Part}(P,\phi)$ is essentially the same proof.

 For  the converse in \emph{\ref{it:part_deux}}, given $\rho$ an LF-flag define $\Phi(u)=\lf{\rho(u)}{\omega(u)}$.  Like before we need to show that $\mrm{Part}(P,\omega,\rho)=\mrm{Part}(P,\Phi)$, and only the second condition in Definition~\ref{def:P_phi_partition} is not trivial. It follows however readily from the fact that $\Phi(u)>\Phi(v)$ is here equivalent to $\omega(u)>\omega(v)$ because of the condition \ref{it:LF}.
 \end{proof}

\subsection{Comparison with AB-flags} 
The notion of $(P,\Phi)$-partitions extends the notion of AB-flagged partitions defined in the previous section:

\begin{proposition}
Let $\rho$ be an AB-flag for $(P,\omega)$. Define a labeled flag on $P$ by $\Phi(u)=\lf{\rho(u)}{\omega(u)}$. Then 
\[\mathrm{Part}(P,\omega,\rho)=\mathrm{Part}(P,\Phi).\]
\end{proposition}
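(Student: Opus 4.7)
The plan is to verify directly that the three defining conditions for $(P,\Phi)$-partitions in Definition~\ref{def:P_phi_partition} agree with the conditions for $(P,\omega,\rho)$-partitions in Definition~\ref{def:restriction}, term by term, using the specific form $\Phi(u)=\lf{\rho(u)}{\omega(u)}$ and the hypothesis that $\rho$ is an AB-flag for $(P,\omega)$. Before anything, I would check that $\Phi$ is indeed a labeled flag, i.e.\ injective: if $\Phi(u)=\Phi(v)$ then in particular $\omega(u)=\omega(v)$, forcing $u=v$ since $\omega$ is a bijection.

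Next I would handle the two easy matches. First, $\mathrm{val}(\Phi(u))=\rho(u)$ by construction, so the bound $f(u)\leq \mathrm{val}(\Phi(u))$ is literally the bound $f(u)\leq \rho(u)$. Second, the weak-inequality condition $f(u)\geq f(v)$ for $u\prec_P v$ appears identically in both definitions.

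The only content of the proof is matching the strict-inequality conditions. I would prove the following claim: for any cover $u\prec_P v$,
\[
\Phi(u)>\Phi(v) \;\Longleftrightarrow\; \omega(u)>\omega(v).
\]
Unpacking the lexicographic order on $\Paug$, $\Phi(u)>\Phi(v)$ means either $\rho(u)>\rho(v)$, or $\rho(u)=\rho(v)$ and $\omega(u)>\omega(v)$. For $(\Rightarrow)$: in the first case apply \ref{it:AB2} to conclude $\omega(u)>\omega(v)$; the second case is immediate. For $(\Leftarrow)$: assume $\omega(u)>\omega(v)$. By \ref{it:AB1} we have $\rho(u)\geq \rho(v)$. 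If $\rho(u)>\rho(v)$ then $\Phi(u)>\Phi(v)$ at once; if $\rho(u)=\rho(v)$ then $\Phi(u)>\Phi(v)$ because $\omega(u)>\omega(v)$. Together with the first two paragraphs this yields the set equality.

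There is no real obstacle here; the only subtlety is noticing that the lexicographic comparison of $\Phi$-values decomposes into exactly the two cases controlled by \ref{it:AB1} and \ref{it:AB2}. In fact, one could alternatively invoke Proposition~\ref{prop:Pphi_as_Pomegarho}\emph{\ref{it:part_deux}} once one verifies that every AB-flag gives rise to an LF-flag after a (possibly needed) relabeling, but the direct verification above is cleaner and avoids modifying $\omega$ or $\rho$.
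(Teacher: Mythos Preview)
Your proposal is correct and follows essentially the same route as the paper: reduce to the claim that $\Phi(u)>\Phi(v)\Leftrightarrow\omega(u)>\omega(v)$ on covers, then unpack the lexicographic order and use the two AB-flag conditions. Your version adds the (easy but worthwhile) check that $\Phi$ is injective, and you cite \ref{it:AB1} and \ref{it:AB2} in the correct places---the paper's printed proof appears to swap the two labels.
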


\begin{proof} Elements of both sets are $P$-partitions that are bounded by $\rho$. It remains to show that the strictness conditions are imposed for the same cover relations. Thus the claim we have to prove is that if $u\prec_P v$, then $\Phi(u)>\Phi(v)$ if and only if $\omega(u)>\omega(v)$.

Pick then $u,v\in P$ such that $u\prec_P v$.
Now $\Phi(u)>\Phi(v)$ is equivalent to either $\rho(u)>\rho(v)$ or ($\rho(u)=\rho(v)$ and  $\omega(u)>\omega(v)$) by definition of the order on $\Paug$. In the latter case, we are already done. In the former case, condition~\ref{it:AB1} implies $\omega(u)>\omega(v)$. 
The converse also holds since we have necessarily $\rho(u)\geq \rho(v)$ by \ref{it:AB2}. 
\end{proof}

The next example shows that in fact, the notion of $(P,\Phi)$-partitions is strictly more general that AB-flagged partitions.

\begin{example} 
Consider $(P,\Phi)$ given by the labeled Hasse diagram on the left of Figure~\ref{fig:counter_ex}. Then $\mathrm{Part}(P,\Phi)$ consists of the five partitions shown on the right.

Let us consider possible choices of $(\omega,\rho)$ such that $\mathrm{Part}(P,\omega,\rho)=\{f_1,f_2,f_3,f_4,f_5\}$ (Note that we know by Proposition~\ref{prop:Pphi_as_Pomegarho} that such a choice is always possible).
   The cover relation $e_1$ is necessarily strict, otherwise the partition $f$ with constant value $1$ would be valid. The other cover relations are necessarily weak since there is a $f_i$ with equality for each of them. This imposes a unique choice for $\omega$, namely $\omega(v_i)=i$ for $i=1,2,3,4$.
   As for $\rho$, we must clearly have $\rho(v_2)=3$ and $\rho(v_3)\geq 3$. Then $\rho(v_1)=1$ is also imposed, since any other choice would make it possible to switch the value of $v_1$ in $f_1$ from $1$ to $2$, resulting in a partition $f$ not in the allowed $f_i$'s.
   
Now assume in addition that $\rho$ is an AB-flag. By \ref{it:AB1} the remaining choices for $\rho$ are $\rho(v_4)=1$ and $\rho(v_3)=3$. This gives us $\rho(v_3)>\rho(v_4)$ while $\omega(v_3)<\omega(v_4)$, contradicting \ref{it:AB2}.  This shows that there are no $\omega,\rho$ with $\rho$ an AB-flag such that $\mathrm{Part}(P,\Phi)=\mathrm{Part}(P,\omega,\rho)$.
\end{example}

\subsection{Slide polynomials}
\label{sub:slide}

Since $(P,\Phi)$-partitions form a certain class of $(P,\omega,\rho)$-partitions, we have the immediate corollary of Theorem~\ref{theo:stanley_restricted}:

\begin{corollary}
\label{coro:stanley_extended}
Let $(P,\Phi)$ be a poset with a labeled flag.  
We have
\[
\mrm{Part}(P,\Phi)=\displaystyle\bigsqcup_{L\in \mrm{Lin}(P)} \mrm{Part}(L,\Phi).
\]
\end{corollary}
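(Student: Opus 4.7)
The plan is to simply chase the definitions through Theorem~\ref{theo:stanley_restricted} using Proposition~\ref{prop:Pphi_as_Pomegarho}, as the authors signal by calling this an immediate corollary. First, I apply Proposition~\ref{prop:Pphi_as_Pomegarho}\emph{\ref{it:part_un}} to $(P,\Phi)$: this produces a bijective labeling $\omega=\omega(\Phi):P\to\{1,\ldots,\#P\}$ and a restriction $\rho=\rho(\Phi)=\mathrm{val}\circ\Phi$ satisfying the LF-condition~\ref{it:LF}, and it yields the identification
\[
\mathrm{Part}(P,\Phi)=\mathrm{Part}(P,\omega,\rho).
\]

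Next, I invoke Theorem~\ref{theo:stanley_restricted} (Assaf--Bergeron's extension of Stanley's fundamental theorem) applied to the triple $(P,\omega,\rho)$, which delivers
\[
\mathrm{Part}(P,\omega,\rho)=\bigsqcup_{L\in \mathrm{Lin}(P)}\mathrm{Part}(L,\omega,\rho).
\]
Here the disjoint-union structure, which is the nontrivial content, is imported from Theorem~\ref{theo:stanley_restricted} wholesale.

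Finally, I need to turn each piece $\mathrm{Part}(L,\omega,\rho)$ back into $\mathrm{Part}(L,\Phi)$ to match the desired right-hand side. Since $L$ and $P$ share the same underlying set, $\Phi$ is also a labeled flag on $L$; moreover, the recipe in Proposition~\ref{prop:Pphi_as_Pomegarho}\emph{\ref{it:part_un}} applied to $(L,\Phi)$ returns the \emph{same} $\omega$ and $\rho$, because the definitions $\omega(u)<\omega(v)\iff\Phi(u)<\Phi(v)$ and $\rho(u)=\mathrm{val}(\Phi(u))$ depend only on the function $\Phi$ on the underlying set, not on the poset structure. Consequently Proposition~\ref{prop:Pphi_as_Pomegarho}\emph{\ref{it:part_un}} gives $\mathrm{Part}(L,\Phi)=\mathrm{Part}(L,\omega,\rho)$ for every $L$, and substituting this into the display completes the proof.

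The only point requiring any care is this last compatibility of the $\Phi\leftrightarrow(\omega,\rho)$ correspondence with restriction to linear extensions, but as just noted, this is immediate from the fact that the assignments $\Phi\mapsto\omega(\Phi)$ and $\Phi\mapsto\rho(\Phi)$ are set-theoretic and ignore $\leq_P$. Thus there is no genuine obstacle; the argument is a two-line reduction, which justifies the word \emph{corollary}.
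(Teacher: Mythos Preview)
Your proof is correct and follows exactly the route the paper intends: the authors simply remark that $(P,\Phi)$-partitions are a class of $(P,\omega,\rho)$-partitions (via Proposition~\ref{prop:Pphi_as_Pomegarho}) and then invoke Theorem~\ref{theo:stanley_restricted}. Your additional observation that the assignments $\Phi\mapsto\omega(\Phi)$ and $\Phi\mapsto\rho(\Phi)$ are purely set-theoretic, hence compatible with passage to linear extensions, makes explicit the one detail the paper leaves implicit.
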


We introduce the generating polynomial
\begin{equation}
\label{eq:KPPhi}
K_{(P,\Phi)}=\sum_{f\in\mathrm{Part}(P,\Phi)} \prod_{u\in P} x_{f(u)}.
\end{equation}
Corollary~\ref{coro:stanley_extended} gives:
\begin{equation}
\label{eq:KPPhi_expansion}
K_{(P,\Phi)}=\sum_{L\in\mathrm{Lin(P)}}  K_{(L,\Phi)}.
\end{equation}
 We will now see that all the (nonzero) $K_{(L,\Phi)}$ are {\em slide polynomials} which we introduce next. Write $L=v_1 \prec v_2\prec\cdots \prec v_r$. Any $(L,\Phi)$-partition $f$ can be encoded in a sequence $(i_1,\ldots,i_r)$ with $i_j=f(v_j)$, while $(L,\Phi)$ is encoded in the injective word $W=a_1\dots a_r\in \injwords{\Paug}$ by simply setting $a_i=\Phi(v_i)$.
  The corresponding generating function  $K_{(L,\Phi)}$ is thus given by the explicit series 
\begin{equation}
\label{eq:slide}
\slide(W)\coloneqq K_{(L,\Phi)}=\sum_{\substack{i_1\geq i_{2}\geq\cdots \geq i_r>0 \\ i_j> i_{j+1}\text{ if }a_j>a_{j+1}\\ i_j\leq \mathrm{val}(a_j)}}x_{i_1}x_{i_2}\cdots x_{i_r}.
\end{equation}

\begin{example}
Consider $W=\lf{3}{1}\lf{3}{2}\lf{1}{1}$. Then~\eqref{eq:slide} says 
\[
\slide(W)=\sum_{i_1\geq i_2\geq i_3> 0}x_{i_1}x_{i_2}x_{i_3}
\]
where $3\geq i_1\geq i_2>i_3$ and $i_3\leq 1$. Thus $\slide(W)=x_3^2x_1+x_3x_2x_1+x_2^2x_1$.

Note that replacing the $\lf{3}{2}$ in the middle by any letter in $\Paug$ larger than $\lf{3}{2}$ does not alter $\slide(W)$. 
Finally note that it can be the case that the sum defining $\slide(W)$ is empty. For instance, if $W=\lf{1}{2}\lf{1}{1}$, then $\slide(W)=0$. This `anomaly' will be fixed when we work in the back stable setting.
\end{example}
Given a weak composition $\bfc$ with positive support, consider the word $W_\bfc$ given by ordering the $\lf{i}{j}\in \bZ_\bfc$ with decreasing $i$, and increasing $j$'s for fixed $i$. 

\begin{definition}[\cite{AssBer20}]
The {\em slide polynomial $\slide_\bfc$} is defined as $\slide(W_\bfc)$.
\end{definition}
\begin{proposition}
For any $W\in \injwords{\Paug}$, the polynomial $\slide(W)$ is either zero or is equal to $\slide_\bfc$ for a unique $\bfc$.
\end{proposition}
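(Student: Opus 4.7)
The plan is to identify when $\slide(W)$ vanishes, and otherwise to canonically extract the weak composition $\bfc$ from an \emph{effective} upper-bound sequence $\tilde v$. Observe first that the sum~\eqref{eq:slide} depends only on the pair $(D,v)$ where $D=\{j:a_j>a_{j+1}\}$ and $v_j=\mrm{val}(a_j)$. Propagating each constraint $i_k\le v_k$ forward through the weakly decreasing chain with mandated strict drops at $D$ (iterating $i_{l+1}\le i_l-[l\in D]$) yields $i_j\le i_k-\#(D\cap[k,j-1])$ for all $k\le j$, hence
\[
i_j\le \tilde v_j \coloneqq \min_{k\le j}\bigl(v_k-\#(D\cap[k,j-1])\bigr).
\]
Since each such inequality is implied by the original constraints and $\tilde v_j\le v_j$, the set of valid tuples is unchanged if we tighten each bound $i_j\le v_j$ to $i_j\le \tilde v_j$.

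If some $\tilde v_j\le 0$, then $i_j\ge 1$ is incompatible and $\slide(W)=0$. Otherwise, the key structural claim is that $\tilde v$ is weakly decreasing with $\tilde v_j>\tilde v_{j+1}$ exactly when $j\in D$. The recursion $\tilde v_{j+1}=\min\bigl(v_{j+1},\,\tilde v_j-[j\in D]\bigr)$ handles both cases: when $j\in D$, the $-1$ forces $\tilde v_{j+1}<\tilde v_j$; when $j\notin D$, injectivity of $W$ in $\Paug$ forces $a_j<a_{j+1}$ and hence $v_j\le v_{j+1}$, so $\tilde v_j\le v_j\le v_{j+1}$ gives $\tilde v_{j+1}=\tilde v_j$.

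With these properties verified, define $\bfc$ by $c_i=\#\{j:\tilde v_j=i\}$. Then $W_\bfc$ has value sequence exactly $\tilde v$ and descent set exactly $D$, so the reduced sum equals $\slide(W_\bfc)=\slide_\bfc$, giving $\slide(W)=\slide_\bfc$. Uniqueness is immediate: $\bfc$ is recoverable as the exponent of the \emph{saturated} monomial $\prod_i x_i^{c_i}$ appearing in $\slide_\bfc$ (attained at $i_j=\tilde v_j$), and this monomial distinguishes $\slide_\bfc$ from any other slide polynomial.

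The main obstacle will be the structural claim on $\tilde v$, where the injectivity of $W$ is indispensable: without it a non-descent position could accommodate $v_{j+1}<v_j$, making $\tilde v$ drop outside $D$ and destroying the identification with some $W_\bfc$.
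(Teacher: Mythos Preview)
Your proof is correct and follows essentially the same approach as the paper: your effective bound sequence $\tilde v$ is exactly the word $U$ the paper constructs (they define it via the same recursion $u_{k+1}=\min(\mrm{val}(w_{k+1}),u_k-1)$ at descents and $u_{k+1}=u_k$ at ascents), and your $\bfc$ is recovered from $\tilde v$ just as the paper recovers it via $\stan(U)=W_\bfc$. The only cosmetic differences are that you supply the closed-form $\min$ formula and justify the ascent case via injectivity explicitly, and that you argue uniqueness via the leading monomial rather than citing that slide polynomials form a basis.
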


\begin{proof}
The uniqueness of $\bfc$ holds because slide polynomials form a basis of $\bQ[\alpxp]$. This is the content \cite[Theorem 3.9]{AS}, and follows readily from a triangular change of basis with monomials.

Now given $W$, we must construct a word $W_\bfc$ such that $\slide(W)=\slide(W_\bfc)$. By scanning $W$ from left to right, let us show one can define such a word in an algorithmic fashion. One has to find a standard word $W_\bfc$ with the same descents as $W$, which moreover gives the same upper bounds. We now explain the construction.

We first construct a word $U$ in $\bZ^*$.
If $W$ is empty, then so is $U$. If $W$ has length $1$, then $W=U$.
Otherwise, say $W=w_1\dots w_r$ for $r\geq 2$, with $w_i$ pairwise distinct letters in $\Paug$. Suppose we have scanned letters $w_1$ through $w_k$ for $1\leq k<r$ and constructed a word $u_1\cdots u_k$. 
If $w_{k+1}<w_k$, then $u_{k+1}\coloneqq \min(\mathrm{val}(w_{k+1}),u_k-1)$. If $w_{k+1}>w_k$, then $u_{k+1}\coloneqq u_k$.
Repeating this we get $U=u_1\dots u_r\in \bZ^*$, a word with nonincreasing letters. If any letter $u_i$ is nonpositive, then $\slide(W)=0$.
Otherwise $\stan(U)=W_{\bfc}$ for a unique $\bN$-vector $\bfc$ with positive support, and by induction one checks that $W_{\bfc}$ satisfies $\slide(W)=\slide(W_\bfc)$ as wanted. 
\end{proof}

We refer to the resulting $W_{\bfc}$ as $\reishi(W)$. For instance, let $W=\lf{5}{1}\lf{6}{5}\lf{8}{3}\lf{3}{2}\lf{3}{1}\lf{1}{2}\lf{2}{1}\lf{3}{3}$. Then $U=55532111$ which in turn means that $\reishi(W)=\lf{5}{1}\lf{5}{2}\lf{5}{3}\lf{3}{1}\lf{2}{1}\lf{1}{1}\lf{1}{2}\lf{1}{3}$. This equals $W_{\bfc}$ for $\bfc=(\dots,0|3,1,1,0,3,0,\ldots)$. 
Note that if the last letter in $W$ is replaced by say $\lf{2}{3}$, then one gets $\slide(W)=0$. This is because $W_{\bfc}=\reishi(W)$ satisfies $\supp(\bfc)\nsubseteq\bZ_+$. 

 The combinatorics  explained in the proof are already presented by Reiner and Shimozono in \cite[Lemma 8]{ReiShi95}: the minor differences are that the authors in \emph{loc. cit.} work from right to left, and work with $\bP$ instead of~$\Paug$.

\subsection{Forest polynomials}
We close this section  by introducing a novel family of polynomials that will form the core of a forthcoming work that links the quotient of $\bQ[x_1,\dots,x_n]$ by the ideal of positive degree quasisymmetric polynomials in $x_1,\dots,x_n$ to the Schubert class expansion of the cohomology class of the permutahedral variety $\mathrm{Perm}_n$.
We lay the foundations for that work here and postpone the discussion of their combinatorics, as well as connections to Schubert polynomials, for the future.

An \emph{indexed tree} $T_I\coloneqq (T,I)$ consists of an interval $I=[a,b]\subset \bZ_+$ with $a<b$ and a rooted plane binary tree $T$ on $b-a$ nodes. 
We can pictorially think of $T_I$ as the tree obtained by completing $T$ by adding $b-a+1$ leaves and then labeling the leaves from left to right with the integers $a$ through $b$.
An \emph{indexed forest} $F$ is a collection $(T_I)$ of indexed trees where the intervals are all disjoint.

It will be convenient to visualize an indexed forest as a collection of complete binary trees supported on the integer lattice $\bZ$ with the leaves of each tree corresponding to the appropriate interval in $\bZ$. 
Let us denote the internal nodes (i.e. everything but the leaves) in $F$ by $\mathrm{IN}(F)$ and the latter's cardinality by $|F|$.
See Figure~\ref{fig:indexed_plus_flag} for a complete indexed forest (ignoring the labels in red for now) with $|F|=6$. 
It comprises three rooted plane trees supported on the intervals $[2,5]$, $[7,8]$, and $[11,13]$ from left to right.

\begin{figure}[!ht]
\includegraphics[scale=0.7]{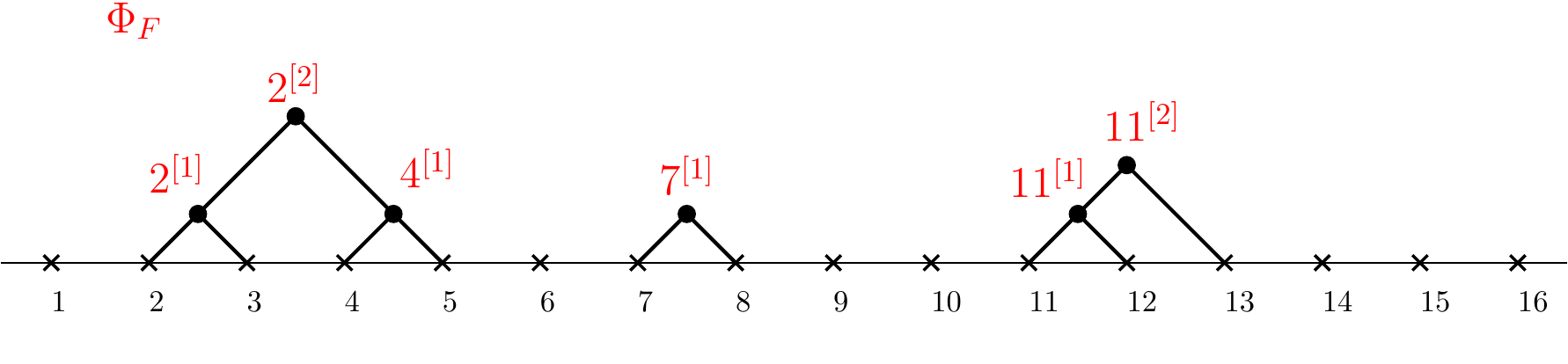}
\caption{An indexed forest with flag}
\label{fig:indexed_plus_flag}
\end{figure}

We may treat an indexed forest $F$ as a Hasse diagram of a poset (also called $F$) with underlying set $\mathrm{IN}(F)$. 
\emph{What roles do the intervals play?} They determine a labeled flag $\Phi_F$ as follows: for each left leaf $lf$, let $m$ be its label and $v_1,\dots,v_k$ be the inner nodes on the left branch ending at $lf$, from bottom to top. Then define $\Phi_F(v_i)\coloneqq \lf{m}{i}$ for $i=1,\ldots,k$. 
 In Figure~\ref{fig:indexed_plus_flag} the left leaves have labels $2,4,7$ and $11$, completely determining $\Phi_F$ as depicted in red.

\begin{definition}
The \emph{forest polynomial} $\forest_F\in\bQ[\alpxp]$ is defined as
\[
\forest_F=K_{(F,\Phi_F)}.
\]
Explicitly, $\forest_F$ is the sum of monomials $\prod_{v\in \mathrm{IN}(F)}x_{f(v)}$ over all labelings $f:\mathrm{IN}(F)\to\bP$ satisfying $f(v)\leq \mathrm{val}(\Phi_F(v))$ for all $v$, and that are weakly decreasing down left edges and strictly increasing down right edges.\end{definition}

Thus $\forest_F$ is the sum of monomial over `semi-standard' increasing labelings of the internal nodes of the indexed forest $F$.
For example, given the indexed forest $F$ in Figure~\ref{fig:indexed_forest_0201}
\begin{figure}
\includegraphics[scale=0.8]{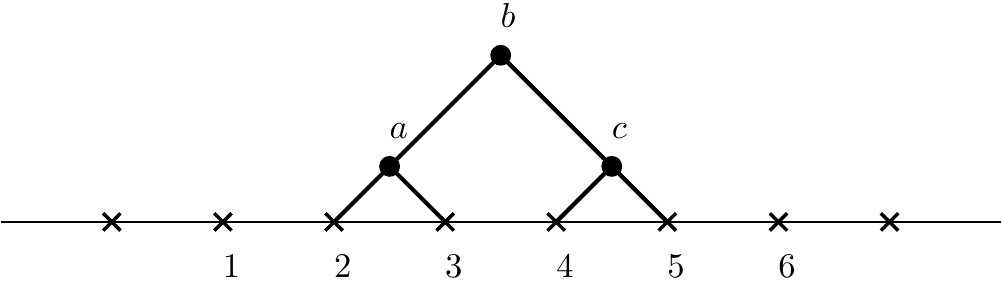}
\caption{An indexed forest $F$ with $c(F)=(0,2,0,1)$}
\label{fig:indexed_forest_0201}
\end{figure} 
one has that
\begin{align*}
\forest_F=\sum_{\substack{\textcolor{red}{2}\geq a\geq b\\ \textcolor{red}{4}\geq c>b}}x_ax_bx_c&=
x_{2}^{2} x_{4} + x_{1} x_{2} x_{4} + x_{1}^{2} x_{4} + x_{2}^{2} x_{3} + x_{1} x_{2} x_{3} + x_{1}^{2} x_{3}  + x_{1}^{2} x_{2}+ x_{1} x_{2}^{2}.
\end{align*}

We can obtain the expansion of $\forest_F$ into slide polynomials thanks to~\eqref{eq:KPPhi_expansion}. Note that a linear extension of the poset $F$ corresponds to a \emph{decreasing forest}. Let $\mrm{Dec}(F)$ be the set of such decreasing labelings, and for any such labeling $\ell$ we note $W(\ell)$ the injective word read from the flag $\Phi_F$. We thus get
\begin{equation}
\forest_F=\sum_{\ell\in \mrm{Dec}(F)} \slide(W(\ell)).
\end{equation}
Returning to our running example we get two decreasing labelings for the $F$ in Figure~\ref{fig:indexed_forest_0201}, which in turn imply the slide expansion
\[
\forest_F=\slide({\lf{2}{1}\lf{4}{1}\lf{2}{2}})+\slide({\lf{4}{1}\lf{2}{1}\lf{2}{2}})=\slide_{12}+\slide_{0201}.
\]

By appealing to a simple bijection between $\bN$-vectors and indexed forests we can show that the set of forest polynomials is a basis for the polynomial ring $\bZ[\alpxp]$.
Indeed, let $c(F)$ be the weak composition given by $c_i(F)$ being the number of nodes on the left branch leading to the leaf labeled $i$. 
It is easily seen that the correspondence $F\mapsto c(F)$ is a bijection between indexed forests and $\bN$-vectors supported on $\bZ_+$.
In fact $\alpx^{c(F)}$ is the revlex leading monomial in $\forest_F$, which in turn suffices to show the following:
\begin{theorem}
 The set of forest polynomials forms a basis for $\bZ[\alpxp]$. 
\end{theorem}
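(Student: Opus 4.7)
The statement is a consequence of two facts signaled in the paragraph preceding it: (i) the map $F\mapsto c(F)$ is a bijection between indexed forests and $\bN$-vectors supported on $\bZ_+$; and (ii) in each $\forest_F$, the monomial $\alpx^{c(F)}$ appears with coefficient $1$ and is the revlex-leading term. Together these yield, restricted to any fixed total degree, a unitriangular change of basis between the forest polynomials and the monomial basis of $\bZ[\alpxp]$, from which the theorem follows by summing over degrees.

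For the bijection (i), injectivity is immediate, since the left-leaf positions together with their associated chain-lengths force the entire binary structure of $F$. For surjectivity I would describe the explicit inverse $c\mapsto F$ as a left-to-right sweep that maintains a stack of internal nodes still awaiting a right child. At position $i$ with $c_i=k>0$, build a fresh left-chain of $k$ new internal nodes with leaf $i$ as the bottom-left child; if the stack is non-empty, pop its top and make the top of the new chain its right child (otherwise the chain becomes a new tree root); then push the $k$ new nodes. At position $i$ with $c_i=0$, pop the top of the stack (if any) and attach leaf $i$ as its right child. A short induction on positions verifies that the procedure terminates, produces a valid indexed forest, and recovers the unique $F$ with $c(F)=c$.

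For (ii), define $f^*\colon\mathrm{IN}(F)\to\bP$ by letting $f^*(v)$ be the label of the leftmost leaf descendant of $v$. By construction of $\Phi_F$ one has $\mathrm{val}(\Phi_F(v))=f^*(v)$, so $f^*$ saturates the flag bound. Across a left edge (parent $p$ to left child $c$), the two endpoints share the same leftmost descendant, so $f^*$ is constant and $\Phi_F(c)<\Phi_F(p)$, leaving only the weak inequality to check. Across a right edge, the leftmost descendant of the child lies strictly to the right of that of the parent, giving $f^*(\text{child})>f^*(\text{parent})$; this matches the strict inequality that is required, since the nodes lie on distinct left spines with val-parts ordered in the same way, so $\Phi_F(\text{child})>\Phi_F(\text{parent})$. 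Hence $f^*$ is a valid $(F,\Phi_F)$-partition, and counting nodes by the label of their leftmost descendant gives $\prod_v x_{f^*(v)}=\alpx^{c(F)}$. For any other valid labeling $f$, the flag bound forces $f\le f^*$ pointwise, so $\#\{v:f(v)\ge M\}\le \#\{v:f^*(v)\ge M\}$ for every $M$; strictness somewhere (forced by $f\ne f^*$) translates at the largest differing index into $\mathrm{mono}(f)<_{\mathrm{revlex}} c(F)$. Thus $\alpx^{c(F)}$ is the revlex-leading term and occurs with coefficient $1$. I expect step (i) to be the main obstacle: the construction is essentially algorithmic, but one has to verify carefully that the sweep halts correctly and that the output satisfies $c(F)=c$.
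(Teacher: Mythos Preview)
Your proposal is correct and follows precisely the approach the paper sketches in the paragraph preceding the theorem: the bijection $F\mapsto c(F)$ together with the fact that $\alpx^{c(F)}$ is the revlex-leading monomial (with coefficient~$1$) of $\forest_F$ yields a unitriangular change of basis with monomials. The paper asserts both facts without proof, so your explicit stack construction for the inverse bijection and your verification via the labeling $f^*(v)=\mathrm{val}(\Phi_F(v))$ are exactly the details one would supply; the pointwise bound $f\le f^*$ and the tail-sum argument for revlex dominance are the natural way to finish.
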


\section{Back stable version}
\label{sub:backstable}

We shall see that the theory of $(P,\Phi)$-partitions (and $(P,\omega,\rho)$-partitions more generally) becomes nicer when we allow $P$-partitions to take nonpositive values. In the expansion~\eqref{eq:KPPhi_expansion} of $K_{(P,\Phi)}$, some terms on the right-hand side can be zero. In fact $K_{(P,\Phi)}$ itself may be zero, and this phenomenon leads to some technical investigation in~\cite{AssBer20}. Removing the lower bound for $(P,\Phi)$-partitions ensures that no term will cancel, leading to a more pleasant theory.

 The corresponding generating functions then belong to a certain class of series.
We describe some structural theory of these series, borrowing notation and drawing motivation  from \cite{Lam18}. We also note that some of the series we consider arose naturally in previous work; see \cite[Proposition 8.9]{NT21} and \cite[Section 3]{TWZ}. 

\subsection{Back stable $(P,\Phi)$-partitions}

We now consider $(P,\Phi)$-partitions $f$ with the added liberty that $f$ can now take values in $\bZ$ instead of $\bP$.

\begin{definition}[Back stable $(P,\Phi)$-partitions]
\label{def:backstabe_P_phi_partition}
Given a poset $P$, let $\Phi$ be an injective function $\Phi:P\to \Zaug$. A \emph{back stable} $(P,\Phi)$-partition is a function $f:P\to \bZ$ such that for any $u,v\in P$:
\begin{itemize}
\item $f(u)\geq f(v)$ if $u\prec_P v$;
\item $f(u)>f(v)$ if $u\prec_P v$ and $\Phi(u)>\Phi(v)$;
\item $f(u)\leq \mathrm{val}(\Phi(u))$.
\end{itemize} 
\end{definition}
Denote by $\overleftarrow{K}_{(P,\Phi)}$ the corresponding generating function. Note that this is now a homogeneous series of degree $\#P$ in $(x_i)_{i\in\bZ}$ such that only a finite number of $x_i$ for $i>0$ occur.
Then Stanley's fundamental theorem, in the form of Corollary~\ref{coro:stanley_extended}, holds with no change.
For $L$ a chain, identified as a word $W\in \injwords{\Zaug}$ as in Section~\ref{sub:slide}, let $\backslide(W)=\overleftarrow{K}_{(L,\Phi)}$. Then we have
\begin{align}
\label{eq:poset_quasi_back}
\overleftarrow{K}_{(P,\Phi)}=\sum_{{L\in \mathrm{Lin}(P)}} \backslide(W).
\end{align}

Now the series $\overleftarrow{K}_{(P,\Phi)}$ is never zero. In particular,  all the series $\backslide(W)$ occur in the previous inequality, which is an actual summation over all linear extensions.
Another advantage is that $\overleftarrow{K}_{(P,\Phi)}$ lives inside the vector space of back stable quasisymmetric functions, introduced in the next subsection.
This space comes equipped with several linear maps, which allow for recovering both the classical story of $(P,\omega)$-partitions and quasisymmetric functions as well as the polynomial story involving $(P,\Phi)$-partitions developed here.

\subsection{Back stable quasisymmetric functions}
\label{sub:backstable_quasisymmetric}

We work with the set of variables $\alpx\coloneqq \{x_i\suchthat i\in\bZ\}$.
Given $b\in \bZ$, define $\alpx_{\leq b}\coloneqq \{x_i\suchthat i\leq b\}$. In the case $b=0$, we set $\alpxm\coloneqq \alpx_{\leq 0}$. 
Let $\alpxp\coloneqq\{x_i|i\geq 1\}$.
Let $\qsym(\alpx_{\leq b})$ be the $\bQ$-algebra
of quasisymmetric functions in the ordered alphabet $\alpx_{\leq b}$. 
The algebra $\qsym(\alpxm)$ will be denoted by $\qsym$.

Let $R \subseteq \bQ[\![\alpx]\!]$ denote the set of bounded degree formal power series $f$ in  $\alpx$ with the property that there exists an $N\in \bZ$ such that no $x_i$ appears in $f$ for $i>N$.

\begin{definition}
\label{def:backstablequasisymmetric}
Let $f\in R$. We say that $f$ is \emph{back quasisymmetric} if there exists a $b\in \bZ$ such that for any sequence of positive integers $(a_1,\dots,a_k)$ and \emph{any} monomial $\mathsf{m}$ in $\alpx_{>b}$, the coefficient of $x_{i_1}^{a_1}\cdots x_{i_k}^{a_k}\mathsf{m}$ in $f$ equals that of $x_{j_1}^{a_1}\cdots x_{j_k}^{a_k}\mathsf{m}$ whenever $i_1<\cdots <i_k\leq b$ and $j_1<\cdots <j_k\leq b$. 
\end{definition}
Equivalently, $f$ is back quasisymmetric if there exists a $b\in \bZ$ such that $f\in \qsym(\alpx_{\leq b})\otimes \bQ[x_{b+1},x_{b+2},\dots]$.

 \begin{remark}
 For the reader looking for something in the spirit of \cite{Lam18}, the following alternative definition should suffice.
 Call $f\in R$ back quasisymmetric if there exists a $b\in \bZ$ such that $\boldsymbol{\sigma}_i(f)=f$ for all $i\leq b$ where $\boldsymbol{\sigma}_i$ acts via Hivert's \emph{quasisymmetrizing action} \cite[Section 3]{Hiv}.
\end{remark}

Let $\backquasi$ denote the space of back quasisymmetric functions. 
Clearly, the space $\overleftarrow{R}$  of back symmetric functions \cite{Lam18} is a subset of $\backquasi$.
We have the following analogue of~\cite[Proposition 3.1]{Lam18}.
\begin{proposition}
\label{prop:3.1LLS}
We have that
\begin{align*}
\backquasi = \qsym \otimes \bQ[\alpx].
\end{align*}
\end{proposition}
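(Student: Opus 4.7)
My plan is to establish set equality in both directions using a single elementary identity. The workhorse is the following monomial-splitting formula: for any $b_1 < b_2 \in \bZ$ and any composition $\alpha = (\alpha_1,\ldots,\alpha_k)$,
\begin{equation*}
M_\alpha(\alpx_{\leq b_2}) = \sum_{j=0}^{k} M_{(\alpha_1,\ldots,\alpha_j)}(\alpx_{\leq b_1}) \cdot M_{(\alpha_{j+1},\ldots,\alpha_k)}(\{x_{b_1+1},\ldots,x_{b_2}\}),
\end{equation*}
obtained by classifying the index tuples contributing to $M_\alpha(\alpx_{\leq b_2})$ according to how many of their entries lie in $\alpx_{\leq b_1}$. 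The second factor on the right is always a polynomial in a finite variable set, so the identity says $M_\alpha(\alpx_{\leq b_2}) \in \qsym(\alpx_{\leq b_1}) \cdot \bQ[x_{b_1+1},\ldots,x_{b_2}]$.

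For the inclusion $\qsym \otimes \bQ[\alpx] \subseteq \backquasi$, given $g \in \qsym$ and $p \in \bQ[\alpx]$, I pick $b \in \bZ_{\leq 0}$ strictly smaller than every index appearing in $p$, so $p \in \bQ[\alpx_{>b}]$. Applying the formula with $(b_1,b_2)=(b,0)$ to each monomial quasisymmetric $M_\alpha(\alpxm)$ in the $M$-expansion of $g$ exhibits $g$ as an element of $\qsym(\alpx_{\leq b}) \cdot \bQ[x_{b+1},\ldots,x_0]$; multiplying by $p$ then places $gp$ in $\qsym(\alpx_{\leq b}) \otimes \bQ[\alpx_{>b}]$, which is exactly the cutoff-$b$ version of $\backquasi$.

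For the reverse inclusion, take $f \in \backquasi$ with cutoff $b$ and write $f = \sum_\alpha M_\alpha(\alpx_{\leq b}) h_\alpha$ with $h_\alpha \in \bQ[\alpx_{>b}]$. When $b \geq 0$, the formula with $(b_1,b_2)=(0,b)$ directly gives $M_\alpha(\alpx_{\leq b}) \in \qsym \cdot \bQ[x_1,\ldots,x_b]$. When $b<0$, I use the formula with $(b_1,b_2)=(b,0)$ in the opposite direction: its $j=k$ term isolates $M_\alpha(\alpx_{\leq b})$, yielding a triangular relation expressing it in terms of $M_\alpha(\alpxm) \in \qsym$ and the shorter $M_\beta(\alpx_{\leq b})$ with polynomial coefficients in $\{x_{b+1},\ldots,x_0\}$; an induction on $\ell(\alpha)$ inverts the system to give $M_\alpha(\alpx_{\leq b}) \in \qsym \cdot \bQ[x_{b+1},\ldots,x_0]$. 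Either way, $f \in \qsym \cdot \bQ[\alpx]$.

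The monomial-splitting identity is essentially immediate from the combinatorial definition of $M_\alpha$. The only step that requires genuine care is the triangular inversion in the $b<0$ case, which is what prevents a cleaner ``fixed cutoff'' argument and is, to my mind, the main (if modest) obstacle. The result is the direct quasisymmetric analogue of \cite[Proposition 3.1]{Lam18}.
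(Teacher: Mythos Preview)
Your monomial-splitting argument correctly establishes the equality of \emph{subsets} $\backquasi = \qsym \cdot \bQ[\alpx]$ inside $R$, and the route you take for $\backquasi \subseteq \qsym\cdot\bQ[\alpx]$ genuinely differs from the paper's. You work with the $M_\alpha$ and, for $b<0$, invert the coproduct relation by a length induction; the paper instead works with the $F_\alpha$ and, for $b<0$, invokes the plethystic/antipode identity
\[
F_{\alpha}(\alpx_{\leq b})=\sum_{\beta \odot \gamma=\alpha \text{ or } \beta\cdot\gamma=\alpha} (-1)^{|\gamma|}F_{\beta}(\alpxm)F_{\gamma^t}(x_{b+1},\dots,x_{0}).
\]
Your version is more self-contained (no antipode formula needed); the paper's is a one-liner once that formula is granted. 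Both are fine for this half.

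There is, however, a real omission. The symbol $\otimes$ in the statement means the abstract tensor product over $\bQ$, and the proposition asserts that the multiplication map $\qsym \otimes_{\bQ} \bQ[\alpx]\to R$, $g\otimes p\mapsto gp$, is an isomorphism onto $\backquasi$. You have shown the image is $\backquasi$ but not that the map is injective, and injectivity is not automatic here: the two factors share the variables $x_i$ for $i\leq 0$, so this is not a tensor product in disjoint alphabets. The paper spends the first half of its proof on exactly this point, showing that any relation $\sum_{\alpha,\bfc} u_{\alpha,\bfc}\,F_\alpha(\alpxm)\,\alpx^{\bfc}=0$ forces all $u_{\alpha,\bfc}=0$: fix $\mathbf{d}$, divide through by $\alpx^{\mathbf d}$, apply the shift $x_i\mapsto x_{i+b}$ followed by $x_i\mapsto 0$ for $i\leq 0$, and let $b\to\infty$; all terms with $\bfc\neq\mathbf d$ vanish in the limit, leaving $\sum_\alpha u_{\alpha,\mathbf d}F_\alpha(\alpxp)=0$, whence $u_{\alpha,\mathbf d}=0$. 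This injectivity is used later in the paper (e.g., to assert that the $F_\alpha(\alpxm)\alpx^{\bfc}$ form a basis of $\backquasi$), so it is not a cosmetic point. Once you supply it, your argument is a legitimate alternative proof.
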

\begin{proof}
Both $\qsym$ and $\bQ[\alpx]$ are contained in $\backquasi$. Let us show that their abstract tensor product naturally embeds naturally in $\backquasi$.
We need to prove that for any linear relation in $\backquasi$
\begin{align}
\label{eq:linear_relation_backquasi}
\sum_{\alpha,c}u_{\alpha,{\bfc}}F_\alpha(\alpxm)\alpx^{\bfc}=0,
\end{align}
the coefficients $u_{\alpha,{\bfc}}\in \bQ$ are all zero. Here $\alpha$ and $\bfc$ run over compositions and $\bN$-vectors respectively. Now fix an $\bN$-vector $\mathbf{d}$. After dividing the relation in~\eqref{eq:linear_relation_backquasi} by $\alpx^{\mathbf{d}}$, we get
\begin{align}
\label{eq:take_limits}
\sum_{ \alpha}u_{\alpha,\mathbf{d}}F_\alpha(\alpxm)=-\sum_{\alpha,\bfc\neq \mathbf{d}}u_{\alpha,{\bfc}}F_\alpha(\alpxm)\alpx^{\bfc-\mathbf{d}}.
\end{align}
Now we apply the shift $x_i\mapsto x_{i+b}$ for $b$ a nonnegative integer, and then set $x_i=0$ for $i\leq0$. 
The expression $F_\alpha(\alpxm)\alpx^{\bfc-\mathbf{d}}$ becomes
$F_\alpha(x_1,\ldots,x_b)\prod_{i}x_{i+b}^{{\bfc}_i-\mathbf{d}_i}$. 
As $b$ goes to infinity, this quantity goes to zero since $\bfc\neq \mathbf{d}$. The limit here is the usual one for series, where for any fixed monomial the coefficients eventually~stabilize.
 
The relation in~\eqref{eq:take_limits} in the limit then gives 
\begin{align}
\sum_{ \alpha}u_{\alpha,\mathbf{d}}F_\alpha(\alpxp)=0.
\end{align} 
We can now conclude that $u_{\alpha,\mathbf{d}}=0$ for all $\alpha$ by linear independence of fundamental quasisymmetric functions. Since $\mathbf{d}$ was arbitrary, we get the desired result.

 Having thus shown  $\backquasi \supseteq \qsym \otimes \bQ[\alpx]$, we proceed to prove the reverse inclusion.
 Let $f\in \backquasi$, so that there exists $b\in \bZ$ such that $f\in \qsym(\alpx_{\leq b})\otimes \bQ[x_{b+1},x_{b+2},\dots]$.
By linearity, it is enough to assume that $f=F_{\alpha}(\alpx_{\leq b})P$ with $P\in \bQ[x_{b+1},x_{b+2},\ldots]$. Say $b\geq 1$. Write the fundamental quasisymmetric function $F_{\alpha}(\alpx_{\leq b})$ as $F_{\alpha}(\alpxm + x_1+\cdots+x_{b})$. We know that this expands as 
\begin{align}
\label{eq:option_1}
F_{\alpha}(\alpx_{\leq b})=\sum_{\beta \odot \gamma=\alpha \text{ or } \beta\cdot\gamma=\alpha} F_{\beta}(\alpxm)F_{\gamma}(x_1,\dots,x_{b}).
\end{align}
Now suppose $b\leq -1$. Then $
F_{\alpha}(\alpx_{\leq b})=F_{\alpha}(\alpxm - x_0-\cdots-x_{b+1})$.
This time we know that
\begin{align}
\label{eq:option_2}
F_{\alpha}(\alpx_{\leq b})=\sum_{\beta \odot \gamma=\alpha \text{ or } \beta\cdot\gamma=\alpha} (-1)^{|\gamma|}F_{\beta}(\alpxm)F_{\gamma^t}(x_{b+1},\dots,x_{0}).
\end{align}
In both cases, this shows that $f=F_{\alpha}(\alpx_{\leq b})P$ is an element of $\qsym\otimes\bQ[\alpx]$ as wanted.
\end{proof}

\subsection{Some maps defined on back stable quasisymmetric functions }
As in \cite[Section 3.4]{Lam18}, we consider the evaluation map $\eta_0:\bQ[\alpx]\to\bQ$  obtained by setting $x_i=0$ for all $i\in \bZ$. 
In other words, it picks the constant term in a polynomial. 
It induces the map $1\otimes \eta_0$ on $\qsym \otimes \bQ[\alpx]$: it essentially picks out the term in $\qsym$ and forgets the polynomial part. 
Following Lam--Lee--Shimozono, we will abuse notation and refer to this induced map on $\backquasi$ by $\eta_0$ as well.

Let $\gamma:\backquasi\to \backquasi$ be the map shifting variables $x_i\mapsto x_{i+1}$ for $i\in\bZ$ \cite[Section 3.3]{Lam18}.
Finally let $\pi_+:\backquasi\to \bQ[\alpxp]$ be the \emph{truncation map} obtained by setting $x_i=0$ for $i\leq 0$; see proof of \cite[Proposition 3.18]{Lam18}. Note that all of these maps are algebra morphisms, and the previous two  were already employed in the proof of Proposition~\ref{prop:3.1LLS}.

\begin{proposition}
For any $f\in \backquasi$, \[\eta_0(f)(\alpxp)=\lim_{b\to\infty} \pi_+(\gamma^b(f)).\]
\end{proposition}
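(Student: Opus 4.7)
The plan is to use Proposition~\ref{prop:3.1LLS} to reduce to a standard spanning set. Both $f\mapsto \pi_+(\gamma^b(f))$ and $f\mapsto \eta_0(f)(\alpxp)$ are $\bQ$-linear, and the decomposition $\backquasi = \qsym \otimes \bQ[\alpx]$ expresses any $f\in\backquasi$ as a finite $\bQ$-linear combination of products $F_\alpha(\alpxm)\cdot \alpx^\bfc$ with $\alpha$ a composition and $\bfc$ an $\bN$-vector. Hence it suffices to establish the identity on such building blocks.

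Fix $f = F_\alpha(\alpxm)\cdot \alpx^\bfc$. Since $\gamma$ is an algebra morphism shifting indices by $1$, one computes $\gamma^b(f)=F_\alpha(\alpx_{\leq b})\cdot \prod_{i} x_{i+b}^{c_i}$. As $\bfc$ has finite support, choosing $b$ large enough so that $\supp(\bfc)+b\subseteq \bZ_+$ gives
\[
\pi_+(\gamma^b(f))=F_\alpha(x_1,\ldots,x_b)\cdot \prod_{i} x_{i+b}^{c_i}.
\]
I then split into two cases. If $\bfc=\fatzero$, the right-hand side is $F_\alpha(x_1,\ldots,x_b)$, which converges coefficient-wise to $F_\alpha(\alpxp)$; meanwhile $\eta_0(f)=F_\alpha(\alpxm)$, so $\eta_0(f)(\alpxp)=F_\alpha(\alpxp)$, matching. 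If $\bfc\neq\fatzero$, then $\eta_0(\alpx^\bfc)=0$, so $\eta_0(f)(\alpxp)=0$, and it remains to show the limit on the left vanishes. For any fixed monomial $m$ with $\supp(m)\subseteq[-N,N]$, once $b$ is large enough that $\min(\supp(\bfc))+b>N$, the factor $\prod_i x_{i+b}^{c_i}$ involves only variables of index exceeding $N$, so $m$ cannot appear in $\pi_+(\gamma^b(f))$; hence its coefficient is $0$ for all such $b$, so the limit is $0$.

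The only real subtlety is checking that the coefficient-wise limit is well-defined in both cases; this is routine once one notes that in the $\bfc=\fatzero$ case the truncated fundamental quasisymmetric functions $F_\alpha(x_1,\ldots,x_b)$ stabilize on any fixed monomial, and in the $\bfc\neq\fatzero$ case the support of the extra monomial factor escapes to infinity so no monomial survives in the limit. The stabilization argument is precisely the one already invoked in the proof of Proposition~\ref{prop:3.1LLS}, so I expect no genuine obstacle beyond a clean bookkeeping of indices.
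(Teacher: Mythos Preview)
Your proof is correct and follows essentially the same route as the paper: reduce by linearity (via Proposition~\ref{prop:3.1LLS}) to elements of the form $g\cdot\alpx^{\bfc}$ with $g\in\qsym$, compute $\pi_+(\gamma^b(f))=g(x_1,\ldots,x_b)\prod_i x_{i+b}^{c_i}$ for large $b$, and split into the cases $\bfc=\fatzero$ and $\bfc\neq\fatzero$. The only cosmetic difference is that you take $g=F_\alpha$ rather than a general element of $\qsym$, which is harmless.
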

Note that $\eta_0(f)$ lives in $\qsym(\alpxm)$. The notation $\eta_0(f)(\alpxp)$ means that we now write it in the variables $\alpxp$ using the natural isomorphism between $\qsym(\alpxm)$ and $\qsym(\alpxp)$. The limit is the usual one for series, already used in the proof of Proposition~\ref{prop:3.1LLS}: the coefficients of any fixed monomial eventually~stabilize.

\begin{proof}
By linearity it is enough to prove it for $f=g\alpx^\bfc$ with $g\in\qsym$, with $\alpx^\bfc=x_{i_1}^{a_1}\cdots x_{i_k}^{a_k}$. Then $\eta_0(f)=g$ if $\alpx^\bfc=1$ and $0$ otherwise. On the other hand, for $b$ large enough $\pi^+(\gamma^b(f))=g(x_1,\ldots,x_b)x_{i_1+b}^{a_1}\cdots x_{i_k+b}^{a_k}$. This has limit $0$ if any of the $a_i$ is nonzero, and $g(\alpxp)$  otherwise as wanted.
\end{proof}

\subsection{Back stable slides}
We now discuss the \emph{back stable slides} $\backslide_{\bfc}\coloneqq \backslide(W_{\bfc})$ beginning with establishing that they belong in $\backquasi$.
In particular, this would show via~\eqref{eq:poset_quasi_back} that $\overleftarrow{K}_{(P,\Phi)}\in\backquasi$ as well.
To this end, we will need a result from \cite{TWZ} that we now recall for the reader's convenience.

Let $\bfc$ be an $\bN$-vector. We let $\flatten{\bfc}$ denote the sequence formed by the positive entries in $\bfc$.
Call a decomposition $\bfc=\bf{d}+\bf{e}$ where addition is component-wise \emph{good} if either $\flatten{\bfc}=\flatten{\bf{d}}\cdot \flatten{\bf{e}}$ or $\flatten{\bfc}=\flatten{\bf{d}}\odot\flatten{\bf{e}}$ holds.
Then \cite[Lemma 3.5]{TWZ} in a special case states that:
\begin{lemma}
\label{lem:twz}
Let $\bfc$ be an $\bN$-vector such that $\supp(\bfc)\subseteq \bZ_{>0}$. Then $\backslide_{\bfc}$ has the following expansion in $\qsym\otimes \bQ[\alpxp]$:
\[
\backslide_{\bfc}=\sum_{\text{good }\bfc=\bf{d}+\bf{e}} F_{\flatten{\bf{d}}}(\alpxm) \slide_{\bf{e}}(\alpxp).
\]
\end{lemma}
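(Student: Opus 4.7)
The plan is to unpack $\backslide_\bfc = \backslide(W_\bfc)$ directly from its defining sum over non-increasing sequences $i_1 \geq i_2 \geq \cdots \geq i_r$ in $\bZ$ (with the usual strictness at descents of $W_\bfc$ and the upper bounds $i_s \leq \mrm{val}(a_s)$), and to partition it according to where the sequence crosses zero. Concretely, for each $k \in \{0, 1, \ldots, r\}$ I would restrict to sequences satisfying $i_1 \geq \cdots \geq i_k > 0 \geq i_{k+1} \geq \cdots \geq i_r$; each such sub-sum factors as a product of a ``positive'' part in $(i_1, \ldots, i_k)$ and a ``non-positive'' part in $(i_{k+1}, \ldots, i_r)$. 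This factorization hinges on two elementary observations: any strictness requirement between $i_k$ and $i_{k+1}$ is automatic from $i_k > 0 \geq i_{k+1}$, and the upper bound $i_s \leq \mrm{val}(a_s)$ is vacuous for $s > k$ since $\supp(\bfc) \subseteq \bZ_{>0}$ guarantees $\mrm{val}(a_s) \geq 1$.

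Next I would identify each factor. The length-$k$ prefix of $W_\bfc$ is itself of the canonical form $W_\mathbf{e}$ for a unique weak composition $\mathbf{e}$ supported in $\bZ_{>0}$, because $W_\bfc$ lists its blocks by decreasing $i$-value and each block by increasing $j$, so any prefix inherits this shape. Thus the positive factor equals $\slide_\mathbf{e}(\alpxp)$. For the non-positive factor, I would reindex via $z_s \coloneqq i_{r+1-s}$ to turn the weakly decreasing sequence into a weakly increasing one $z_1 \leq \cdots \leq z_{r-k} \leq 0$, with strict increments at the mirror image of the descent set of the suffix $a_{k+1} \cdots a_r$. This is precisely $F_\beta(\alpxm)$ for $\beta$ the reverse of the composition that records the block sizes of the suffix word.

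Finally I would match each $k$ to a good decomposition $\bfc = \mathbf{d} + \mathbf{e}$. When $k$ lies at a block boundary of $W_\bfc$ (including $k = 0$ and $k = r$), the supports of $\mathbf{d}$ and $\mathbf{e}$ are disjoint with $\supp(\mathbf{d}) < \supp(\mathbf{e})$, giving a concatenation $\flatten{\bfc} = \flatten{\mathbf{d}} \cdot \flatten{\mathbf{e}}$; when $k$ lies strictly inside a block, $\mathbf{d}$ and $\mathbf{e}$ share a single index at which their values sum to the corresponding entry of $\bfc$, giving a near-concatenation $\flatten{\bfc} = \flatten{\mathbf{d}} \odot \flatten{\mathbf{e}}$. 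A direct block-size count then identifies the composition $\beta$ above with $\flatten{\mathbf{d}}$ in each case. The main obstacle I anticipate is tracking the two order reversals carefully (the decreasing-$i$ convention in $W_\bfc$ versus the increasing-index convention for $\flatten{\,\cdot\,}$ and for $F_\alpha$, and the decreasing-to-increasing reindexing of the sequence itself); these reversals cancel, which is exactly why $\flatten{\mathbf{d}}$, rather than its reverse, appears in the final formula.
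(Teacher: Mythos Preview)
The paper does not give its own proof of this lemma; it is quoted from \cite{TWZ} without argument. Your direct combinatorial proof is correct and is the natural one: partition the defining sum for $\backslide(W_\bfc)$ by the index $k$ at which the weakly decreasing sequence $(i_1,\ldots,i_r)$ crosses from positive to nonpositive, note that the constraints decouple across this cut (strictness at position $k$ is automatic from $i_k>0\geq i_{k+1}$, and the bounds $i_s\leq\mrm{val}(a_s)$ are vacuous for $s>k$ since $\mrm{val}(a_s)\geq 1$), and identify the two factors as $\slide_{\mathbf{e}}(\alpxp)$ and $F_{\flatten{\mathbf{d}}}(\alpxm)$ respectively. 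The reversal bookkeeping you flag does come out as stated: the decreasing-$i$ convention in $W_\bfc$ and the increasing-index convention in $\flatten{\cdot}$ cancel against the decreasing-to-increasing reindexing of the tail, leaving $\flatten{\mathbf{d}}$ rather than its reverse. The map $k\mapsto(\mathbf{d},\mathbf{e})$ with $|\mathbf{e}|=k$ is then the bijection between the $r+1$ cut points and the $r+1$ good decompositions.

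One small point worth recording when you write this up: the paper's literal definition of a good decomposition (only requiring $\flatten{\bfc}=\flatten{\mathbf{d}}\cdot\flatten{\mathbf{e}}$ or $\flatten{\mathbf{d}}\odot\flatten{\mathbf{e}}$) would also admit, for instance, $\mathbf{d}=(0,0,0,2)$, $\mathbf{e}=(0,2,0,0)$ when $\bfc=(0,2,0,2)$. The worked example following the lemma makes clear that the intended reading additionally requires $\max\supp(\mathbf{d})\leq\min\supp(\mathbf{e})$, so that $(\mathbf{d},\mathbf{e})$ is the unique pair realizing a given prefix/suffix split of $\flatten{\bfc}$. Your construction lands in exactly this class, so there is no issue.
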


\begin{example}
Let $\bfc=(0,2,0,2,0,0,\ldots)$.  Then $\flatten{\bfc}=(2,2)$ and one can easily check that we have the following five decompositions for $(2,2)$: $\varnothing \cdot (2,2)$, $(1)\odot (1,2)$, $(2)\cdot (2)$, $(2,1)\odot (1)$, $(2,2)\cdot \varnothing$. These in turn translate to five good decompositions and we obtain
\[
\backslide_{0202}=\slide_{0202}(\alpxp)+F_{1}(\alpxm)\slide_{0102}(\alpxp)+F_{2}(\alpxm)\slide_{0002}(\alpxp)+F_{21}(\alpxm)\slide_{0001}(\alpxp)+F_{22}(\alpxm).
\]
\end{example}

\begin{proposition}
\label{prop:in_back_quasi}
For any $\bN$-vector $\bfc$, we have $\backslide_{\bfc}\in \backquasi$.
As a consequence, $\overleftarrow{K}_{(P,\Phi)}\in \backquasi$ for any poset $P$ with labeled flag $\Phi$.
\end{proposition}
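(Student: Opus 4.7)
The plan is to handle the case $\supp(\bfc)\subseteq\bZ_{>0}$ first via Lemma~\ref{lem:twz}, then reduce the general case to this via the shift $\gamma$, and finally deduce the consequence from~\eqref{eq:poset_quasi_back}. In the positive-support case, Lemma~\ref{lem:twz} writes $\backslide_\bfc$ as an explicit finite sum of products $F_{\flatten{\bf{d}}}(\alpxm)\,\slide_{\bf{e}}(\alpxp)$; each such term lies in $\qsym\otimes\bQ[\alpxp]\subseteq\qsym\otimes\bQ[\alpx]=\backquasi$ by Proposition~\ref{prop:3.1LLS}, so $\backslide_\bfc\in\backquasi$.

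For a general $\bN$-vector $\bfc$, introduce the translate $\bfc^{(b)}$ defined by $(\bfc^{(b)})_i=c_{i-b}$. Reading off the defining sum~\eqref{eq:slide} in the back-stable regime $i_j\in\bZ$, one observes that $W_{\bfc^{(b)}}$ is obtained from $W_\bfc$ by replacing each letter $\lf{i}{j}$ with $\lf{i+b}{j}$; both the value-bounds $\mathrm{val}(a_j)$ and the summation indices shift uniformly by $b$, and substituting $i'_j=i_j-b$ yields
\[
\backslide_{\bfc^{(b)}}=\gamma^b(\backslide_\bfc).
\]
Choosing $b$ sufficiently large that $\supp(\bfc^{(b)})\subseteq\bZ_{>0}$, the previous paragraph places $\backslide_{\bfc^{(b)}}$ in $\backquasi$. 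Since the characterization $f\in\qsym(\alpx_{\leq N})\otimes\bQ[x_{N+1},x_{N+2},\dots]$ is trivially stable under $\gamma^{\pm 1}$ (one merely shifts $N$), applying $\gamma^{-b}$ yields $\backslide_\bfc\in\backquasi$.

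For the consequence, the expansion~\eqref{eq:poset_quasi_back} reduces the claim to showing $\backslide(W)\in\backquasi$ for every injective word $W\in\injwords{\Zaug}$ arising from a linear extension. The back-stable counterpart of the $\reishi$ algorithm from Section~\ref{sub:slide} still produces a weakly decreasing word $U\in\bZ^*$, but here nonpositive entries in $U$ are permissible, so the procedure never aborts and returns a unique $\bN$-vector $\bfc$ (with $\supp(\bfc)\subseteq\bZ$) satisfying $\backslide(W)=\backslide_\bfc$; the first part then places this in $\backquasi$, and closure of $\backquasi$ under finite sums completes the proof. The principal obstacle is the commutation identity $\gamma^b(\backslide_\bfc)=\backslide_{\bfc^{(b)}}$; once it is recorded, everything reduces cleanly to Lemma~\ref{lem:twz} and Proposition~\ref{prop:3.1LLS}.
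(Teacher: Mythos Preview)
Your proof is correct and follows essentially the same route as the paper: reduce to $\supp(\bfc)\subseteq\bZ_{>0}$ via the shift identity $\gamma^b(\backslide_\bfc)=\backslide_{\bfc^{(b)}}$ and closure of $\backquasi$ under $\gamma^{\pm1}$, then invoke Lemma~\ref{lem:twz} and Proposition~\ref{prop:3.1LLS}. You actually supply slightly more detail than the paper does---in particular, the paper's one-line appeal to~\eqref{eq:poset_quasi_back} tacitly assumes each $\backslide(W)$ is some $\backslide_\bfc$, and your observation that the back-stable $\reishi$ algorithm never aborts makes this explicit.
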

\begin{proof}
	It suffices to assume $\supp(\bfc)\subset \bZ_{>0}$. Indeed, abuse notation and define $\gamma(\bfc)$ to be the $\bN$-vector obtained by shifting $\bfc$ once to the right. Then it is clear that for $i\in \bZ$ that
	\[
	\gamma^{i}\left(\backslide_{\bfc} \right)=\backslide_{\gamma^i(\bfc)}.
	\]
	Now note that $\backquasi$ is closed under shifting.
	
	When $\supp(\bfc)\subset\bZ_{>0}$, then Lemma~\ref{lem:twz} immediately implies that $\backslide_{\bfc}\in \backquasi$. As for showing $\overleftarrow{K}_{(P,\Phi)}\in \backquasi$, note that it is a sum of back stable slides as in~\eqref{eq:poset_quasi_back}.
\end{proof}

The next result describes the distinguished role played by the back stable slides in $\backquasi$. It is the analogue of \cite[Theorem 3.5]{Lam18}.
\begin{theorem}
\label{theo:backslide_basis}
The back stable slides $\backslide_{\bfc}$ for $\bfc$ an $\bN$-vector form a $\bQ$-basis of $\backquasi$.
\end{theorem}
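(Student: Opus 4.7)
The plan is to prove linear independence and spanning separately.

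For linear independence, I would exploit the shift identity $\gamma^k(\backslide_\bfc) = \backslide_{\gamma^k(\bfc)}$ together with the observation that $\pi_+(\backslide_\bfc) = \slide_\bfc$ when $\supp(\bfc) \subseteq \bZ_{>0}$ (and is zero otherwise, since every monomial in $\backslide_\bfc$ has some $x_{i_j}$ with $i_j \leq \mathrm{val}(a_j) \leq 0$). Given a hypothetical finite dependence $\sum_\bfc a_\bfc \backslide_\bfc = 0$, I would choose $k$ large enough that $\gamma^k(\bfc) \subseteq \bZ_{>0}$ for every $\bfc$ involved, and then apply $\pi_+ \circ \gamma^k$ to obtain $\sum_\bfc a_\bfc \slide_{\gamma^k(\bfc)} = 0$ in $\bQ[\alpxp]$. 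Since $\gamma^k$ is injective on $\bN$-vectors and slide polynomials form a basis of $\bQ[\alpxp]$ by Theorem 3.9 of \cite{AS}, all $a_\bfc$ must vanish.

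For spanning, by Proposition~\ref{prop:3.1LLS} it suffices to exhibit each basis element $F_\alpha(\alpxm) \cdot \alpx^\bfc$ as a finite $\bQ$-linear combination of back stable slides. The plan rests on two observations. First, a direct computation shows that each $F_\alpha(\alpxm)$ is itself a back stable slide $\backslide_{\bfc_\alpha}$, where $\bfc_\alpha$ is the $\bN$-vector with $(\bfc_\alpha)_{-\ell(\alpha)+k} = \alpha_k$ for $k = 1, \ldots, \ell(\alpha)$; this is verified by reversing the defining sequence in $\backslide$ and matching the descent structure of $W_{\bfc_\alpha}$ against the descent set of $\alpha$. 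Second, I would establish a generalization of Lemma~\ref{lem:twz} valid for $\bN$-vectors $\bfC$ whose support may straddle a chosen threshold $b$: for $\bfC = \mathbf{d} + \mathbf{e}$ with $\mathbf{d}$ supported at or below $b$ and $\mathbf{e}$ strictly above, the expansion of $\backslide_\bfC$ should contain a term $F_{\flatten(\mathbf{d})}(\alpx_{\leq b}) \cdot \slide_{\mathbf{e}}(\alpx_{>b})$ together with strictly simpler correction terms. Given $F_\alpha \cdot \alpx^\bfc$ (assuming $\supp(\bfc) \subseteq \bZ_{>0}$ after shifting), I take $\bfC = \bfc_\alpha + \bfc$ and invert the triangular expansion by induction on total degree, with a secondary parameter such as $|\mathbf{e}|$ ordering decompositions within a fixed degree.

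The principal obstacle will be proving the generalized Lemma~\ref{lem:twz} rigorously and arranging the inductive scheme so that the correction terms are genuinely simpler than the target term. The notions of good decomposition (concatenation and near-concatenation of compositions) need to be reinterpreted when supports cross zero, and the primary and secondary induction indices must be chosen to ensure the inversion terminates in finitely many steps.
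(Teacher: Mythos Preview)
Your proposal is correct in outline and lands on essentially the same mechanism as the paper, but you make the spanning argument harder than it needs to be.

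For linear independence, your shift-and-truncate reduction to the slide basis of $\bQ[\alpxp]$ works. The paper's argument is shorter: $\alpx^{\bfc}$ is the revlex leading monomial of $\backslide_{\bfc}$, so distinct $\bN$-vectors give distinct leading terms. Either route is fine.

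For spanning, two simplifications are available. First, rather than targeting the basis $F_\alpha(\alpxm)\,\alpx^{\bfc}$, expand the polynomial factor in the slide basis of $\bQ[\alpxp]$ at the outset, so that it suffices to express each $F_\alpha(\alpxm)\,\slide_{\bfc}$ (with $\supp(\bfc)\subseteq\bZ_{>0}$) as a combination of back stable slides. This matches the output of Lemma~\ref{lem:twz} exactly and removes the mismatch in your plan between the monomial target and the $F_\beta\cdot\slide_{\mathbf e}$ terms that the lemma produces; as written, your inversion would require an additional monomial-to-slide conversion layered on top. Second, once you have reduced to $\supp(\bfc)\subseteq\bZ_{>0}$, only the threshold $b=0$ is needed, and the version of Lemma~\ref{lem:twz} required is precisely \cite[Lemma~3.5]{TWZ} applied to $\mathbf d=(\ldots,0,\alpha\,|\,c_1,\ldots,c_k,0,\ldots)$: it yields $\backslide_{\mathbf d}=F_\alpha(\alpxm)\slide_{\bfc}+\sum F_\beta(\alpxm)\slide_{\mathbf e}$ with $|\mathbf e|<|\bfc|$, so induction on $|\bfc|$ (the weight of the positive part) finishes immediately. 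This is the paper's proof. Note that the total degree $|\alpha|+|\bfc|$ is fixed throughout, so your proposed primary induction on total degree is vacuous; the parameter that actually drives the argument is what you called the secondary one, namely $|\mathbf e|$.
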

\begin{proof}
The linear independence holds for the same reason as in \cite{Lam18}. Indeed the revlex leading monomial in $\backslide_{\bfc}$ is  $\alpx^{\bfc}$.
We now show that the $\backslide_{\bfc}$ are spanning. 

Pick $f\in \backquasi$. 
Without loss of generality assume that $f\in \qsym\otimes\bQ[\alpx_+]$: indeed, like before, this follows from the fact that $\backquasi$ is closed under shifting.

We know that slide polynomials form a basis for $\bQ[\alpxp]$, so  we may assume that $f$ has the form $f=F_{\alpha}(\alpxm)\slide_{(c_1,\dots,c_k)}$. If $c_i=0$ for all $1\leq i\leq k$, then we are done. 
Otherwise let ${\bf d}=(\dots,0,\alpha|c_1,\dots,c_k,0,\dots)$. By \cite[Lemma 3.5]{TWZ} (essentially the statement in Lemma~\ref{lem:twz}) the difference $f-\backslide_{\bf{d}}$ is a sum of $F_{\beta}(\alpxm)\slide_{(a_1,\dots,a_k)}$ where $a_1+\dots+a_k<c_1+\dots+c_k$.
Induction implies the claim.  
\end{proof}

\begin{remark}As the reader may expect at this stage, by allowing our indexed forests to be supported on $\bZ$ rather than $\bZ_+$, we may easily define \emph{back stable forest polynomials}.
The resulting family of polynomials $\backforest_F\in \backquasi$ then expands as a sum of back stable slide polynomials, one for each element in $\mrm{Dec}(F)$.
\end{remark}

The lemma next saying that slides and fundamental quasisymmetric functions simultaneously inhabit $\backslide_{\bfc}$ is straightforward.
\begin{lemma}
We have $\eta_0(\backslide_{\bfc})=F_{\flatten{\bfc}}(\alpxm).$ Additionally
\[
\pi_+(\backslide_{\bfc})=\left\lbrace\begin{array}{ll}\slide_{\bfc} & \supp(\bfc)\subset \bZ_+\\
0 & \text{otherwise.}\end{array}\right.
\]
\end{lemma}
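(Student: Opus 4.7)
My plan is to handle the second (polynomial truncation) statement first and then use it together with the limit identity just established for $\eta_0$ to deduce the first.

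For the second statement, I would unpack the definition of $\backslide_\bfc$: the map $\pi_+$ sends every $x_i$ with $i\leq 0$ to zero, so in the defining sum only terms with $i_r\geq 1$ survive. Combined with the built-in constraint $i_j\leq \mathrm{val}(a_j)$, such a surviving term requires $\mathrm{val}(a_j)\geq 1$ for every letter $a_j$ of $W_\bfc$, which is exactly the condition $\supp(\bfc)\subset\bZ_+$. When it holds, the remaining conditions are precisely those defining $\slide_\bfc$; otherwise the sum is empty and $\pi_+(\backslide_\bfc)=0$.

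For the first statement, I would invoke the identity $\eta_0(f)(\alpxp)=\lim_{b\to\infty}\pi_+(\gamma^b(f))$ from the preceding subsection. A direct check from the definitions (and already noted in the proof of Proposition~\ref{prop:in_back_quasi}) gives $\gamma^b(\backslide_\bfc)=\backslide_{\gamma^b(\bfc)}$, since shifting $x_i\mapsto x_{i+b}$ corresponds to shifting the support of $\bfc$, and hence the values of the letters of $W_\bfc$, by the same amount $b$. For $b$ large we have $\supp(\gamma^b(\bfc))\subset \bZ_+$, so by the second statement $\pi_+(\backslide_{\gamma^b(\bfc)})=\slide_{\gamma^b(\bfc)}$, and therefore
\[
\eta_0(\backslide_\bfc)(\alpxp)=\lim_{b\to\infty}\slide_{\gamma^b(\bfc)}.
\]
The remaining task is to identify this limit with $F_{\flatten{\bfc}}(\alpxp)$: as $b\to\infty$ the upper bounds $\mathrm{val}(a_j)$ recede to infinity, so for any fixed monomial they eventually impose no restriction, leaving only the conditions $i_1\geq\cdots\geq i_r\geq 1$ together with strict descents at the descent positions of $W_\bfc$. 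The standard index reversal $j_\ell\coloneqq i_{r+1-\ell}$ then rewrites this as $1\leq j_1\leq\cdots\leq j_r$ with strict ascents at positions forming the descent set of $\flatten{\bfc}$, which is exactly $F_{\flatten{\bfc}}(\alpxp)$. Transporting along the canonical isomorphism $\qsym(\alpxp)\cong\qsym(\alpxm)$ yields the stated equality. (Alternatively, in the sub-case $\supp(\bfc)\subset\bZ_+$ one can bypass the limit by applying $\eta_0$ directly to the expansion of Lemma~\ref{lem:twz}, since $\eta_0$ extracts the constant term $[\bfe=\fatzero]$ of $\slide_\bfe(\alpxp)$; only the good decomposition $\bfc=\bfc+\fatzero$ survives and contributes $F_{\flatten{\bfc}}(\alpxm)$.)

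The one point of substance is the bookkeeping in the last step---matching the descent positions of $W_\bfc$ (whose blocks read in order of \emph{decreasing} support index) under the index reversal to the descent set of $\flatten{\bfc}$ (read in order of \emph{increasing} index). This is the familiar computation relating a composition to its reverse, of the same flavor as the identity $K_{(L,\omega)}=F_{\rev{\alpha_\omega}}$ recorded in Section~\ref{sub:stanley}, and does not present any real obstacle.
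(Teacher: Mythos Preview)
Your argument is correct. The paper does not actually supply a proof of this lemma; it simply declares it ``straightforward'' and moves on, so there is no detailed approach to compare against. Your treatment of $\pi_+$ by direct inspection of the defining sum is exactly the kind of verification the authors have in mind, and your handling of $\eta_0$ via the limit identity $\eta_0(f)(\alpxp)=\lim_{b\to\infty}\pi_+(\gamma^b(f))$ together with $\gamma^b(\backslide_\bfc)=\backslide_{\gamma^b(\bfc)}$ is sound; the descent-reversal bookkeeping you flag is indeed routine and works out as you describe. The alternative you sketch---applying $\eta_0$ termwise to the expansion of Lemma~\ref{lem:twz} and observing that only the decomposition $\bfc=\bfc+\mathbf{0}$ contributes---is arguably the more direct route (and extends to general $\bfc$ after a single shift), but both arguments are valid and neither exceeds what the paper presumably intends by ``straightforward.''
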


Our next result follows from Stanley's theory again. It gives a \emph{shuffle rule} for multiplying back stable slide polynomials.
Let $\bfc$ and $\bf{d}$ be $\bN$-vectors. We have $W_{\bfc}$ and $W_{\bf{d}}$ defined as usual. Define the set of shuffles $W_{\bfc}\shuffle W_{\bf{d}}$ where we replace every instance of $\lf{i}{j}$ in $W_{\bf{d}}$ by $\lf{i}{j+c_i}$. This shift ensures, amongst other things, that the set of shuffles comprises injective words.
\begin{proposition}
\label{prop:multiplying_slides}
Given $\bN$-vectors $\bfc$ and $\bf{d}$ we have
\[
\backslide_{\bfc}\cdot \backslide_{\bf{d}}=\sum_{L\in W_{\bfc}\shuffle W_{\bf{d}}} \backslide(L).
\]
\end{proposition}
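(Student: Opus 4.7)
The plan is to realize both sides as back stable generating functions of $(P,\Phi)$-partitions, where the left-hand side corresponds to a disjoint union of two chains and the right-hand side comes from the back stable Stanley decomposition into linear extensions.

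First, I would observe that by definition $\backslide_{\bfc}=\backslide(W_\bfc)=\overleftarrow{K}_{(L_\bfc,\Phi_\bfc)}$, where $L_\bfc$ is a chain $v_1\prec\cdots\prec v_r$ and $\Phi_\bfc$ is the labeled flag sending $v_i$ to the $i$-th letter of $W_\bfc$. A key preliminary observation is that $\backslide(W)$ depends only on the values $\mathrm{val}(a_j)$ and on the \emph{relative order} of the letters $a_j$, since these are the only data entering the sum \eqref{eq:slide}. Consequently, if we replace every $\lf{i}{j}$ appearing in $W_{\bf d}$ by $\lf{i}{j+c_i}$ (call the result $\widetilde{W}_{\bf d}$), then $\backslide(\widetilde{W}_{\bf d})=\backslide(W_{\bf d})=\backslide_{\bf d}$; the shift was designed precisely so that no letter of $\widetilde{W}_{\bf d}$ collides with any letter of $W_\bfc$.

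Next, consider the disjoint union poset $P=L_\bfc\sqcup L_{\bf d}'$, where $L_{\bf d}'$ is a chain carrying the labeled flag $\widetilde{\Phi}_{\bf d}$ encoded by $\widetilde{W}_{\bf d}$. Define $\Phi$ on $P$ by gluing these two flags; by the previous paragraph $\Phi:P\to \Zaug$ is injective, so $(P,\Phi)$ is a legitimate labeled flagged poset. Since $P$ has no cover relations between its two components, a back stable $(P,\Phi)$-partition is simply a pair of back stable partitions on the two chains. Taking generating functions we therefore get
\[
\backslide_\bfc\cdot \backslide_{\bf d}=\overleftarrow{K}_{(L_\bfc,\Phi_\bfc)}\cdot \overleftarrow{K}_{(L_{\bf d}',\widetilde{\Phi}_{\bf d})}=\overleftarrow{K}_{(P,\Phi)}.
\]

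Now I apply the back stable Stanley decomposition, i.e. the analogue of Corollary~\ref{coro:stanley_extended} that underlies~\eqref{eq:poset_quasi_back}:
\[
\overleftarrow{K}_{(P,\Phi)}=\sum_{L\in \mathrm{Lin}(P)} \backslide(W_L),
\]
where $W_L$ is the injective word in $\Zaug$ obtained by reading $\Phi$ along the chain $L$. Since linear extensions of the disjoint union of two chains are exactly the shuffles of the two chains, and reading $\Phi$ along such a shuffle produces precisely a word in the shuffle set $W_\bfc\shuffle \widetilde{W}_{\bf d}$, this sum is identically $\sum_{L\in W_\bfc\shuffle \widetilde{W}_{\bf d}} \backslide(L)$, matching the right-hand side of the proposition (the paper's definition of $W_\bfc\shuffle W_{\bf d}$ already incorporates this same shift).

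The main obstacle I anticipate is a purely notational one: justifying cleanly that the $j$-shift on $W_{\bf d}$ leaves $\backslide$ invariant and simultaneously makes the combined flag injective, so that the disjoint union argument actually applies. Once this bookkeeping is in place, the proof is just Stanley's theorem for $(P,\Phi)$-partitions applied to a disjoint union of two chains.
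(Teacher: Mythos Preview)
Your proposal is correct and is precisely the argument the paper has in mind: the paper's only justification is the sentence ``Our next result follows from Stanley's theory again,'' and what you wrote is exactly that\textemdash apply the back stable Stanley decomposition~\eqref{eq:poset_quasi_back} to the disjoint union of the two chains, having first shifted the superscripts in $W_{\bf d}$ to make the combined flag injective. Your bookkeeping (that the shift preserves both $\mathrm{val}$ and the relative order of letters, hence leaves $\backslide$ invariant, while guaranteeing disjoint letter sets) is the only thing that needed to be said, and you said it.
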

Note that each summand on the right-hand side is a back stable slide polynomial. Hitting the expansion with $\eta_0$ gives us the usual shuffle product for $F_{\flatten{\bfc}}\cdot F_{\flatten{\bf{d}}}$, while applying $\pi_+$ recovers the rule for slides.

\begin{example}
Let $\bfc=(0,1,0,2)$ and ${\bf d}=(0,1)$. Then $W_{\bfc}=\tcb{\lf{4}{1}\lf{4}{2}\lf{2}{1}}$ and $W_{\bf{d}}=\tcr{\lf{2}{1}}$. We have that
\[
W_{\bfc}\shuffle W_{\bf{d}}=\{\tcb{\lf{4}{1}\lf{4}{2}\lf{2}{1}}\tcr{\lf{2}{2}}, \tcb{\lf{4}{1}\lf{4}{2}}\tcr{\lf{2}{2}}\tcb{\lf{2}{1}}, \tcb{\lf{4}{1}}\tcr{\lf{2}{2}}\tcb{\lf{4}{2}\lf{2}{1}},\tcr{\lf{2}{2}}\tcb{\lf{4}{1}\lf{4}{2}\lf{2}{1}} \}
\]
Consider the $L\in W_{\bfc}\shuffle W_{\bf{d}}$ from left to right. The corresponding  $\backslide(L)$ are $\backslide_{(0,2,0,2)}$, $\backslide_{(1,1,0,2)}$, $\backslide_{(1,2,0,1)}$, and $\backslide_{(1,3)}$ respectively.
So we infer that 
\[
\backslide_{0102}\cdot \backslide_{01}=\backslide_{0202}+\backslide_{1102}+\backslide_{1201}+\backslide_{13}.
\]
\end{example} 

\section{The inverse slide Kostka matrix}

We showed in Theorem~\ref{theo:backslide_basis} that back stable slide polynomials form a basis of $\backquasi$. In view of Proposition~\ref{prop:3.1LLS}, another basis is given by the $F_\alpha(\alpxm)\alpx^{\bfc}$ for any composition $\alpha$ and $\bN$-vector $\bfc$. In this section we give an explicit change of basis from the first basis to the second. Since we know how to multiply back stable slide polynomials by Proposition $\ref{prop:multiplying_slides}$, it is enough to treat separately the case of $F_\alpha(\alpxm)$ and $\alpx^{\bfc}$. Now $F_\alpha(\alpxm)$ is already equal to the back stable slide $\backslide_\bfc$ with $\bfc=(\ldots,0,{\alpha}|0,0,\ldots)$, so it remains to express a monomial  $\alpx^{\bfc}$ in terms of back stable slide polynomials.

Recall that we define $\backslide_{\bfc}$ as $\backslide(W_{\bfc})$, where $W_\bfc$ is a special word attached to $\bfc$. These words are precisely the standardizations of \emph{nonincreasing} words in $\bZ^*$, that is words where letters decrease weakly from left to right. We will use letters $C,D,E$ for nonincreasing words, to recall that they correspond bijectively  to  $\bN$-vectors $\bfc,{\bf d}, {\bf e}$, but we will not use the latter. We will also write simply $\slide(C)$ and $\backslide(C)$ as no confusion is caused.
 
Let $\mathcal{P}$ denote the set of all nonincreasing words.
Fix $C=C_1C_2\cdots C_m \in \mathcal{P}$ with $C_i\in\bZ$.
By grouping equal values one may write $C=M_1^{m_1}M_2^{m_2}\cdots M_t^{m_t}$ where $m_i>0$ and $M_i>M_{i+1}$ for $1\leq i<t$. 
 Fix $i\in{1,\ldots,t}$, and let $x_0=M_i$ by convention. Now let 
 \begin{align}
 B_i=\{x_1\cdots x_{m_i}\in \mathcal{P}\suchthat x_{j+1}\in \{x_{j},x_j-1\}\text{ for } 0\leq j\leq m_i-1 \text{, and } x_{m_i}>M_{i+1}\}.
 \label{eq:def_B_i}
 \end{align}
 This given define the following set that is crucial for us:
 \[
 \mathbb{B}_C=\{X^1\cdots X^t\suchthat X^i\in B_i\text{ for } 1\leq i\leq t\}.
 \]
Note that $\mathbb{B}_C\subset \mathcal{P}$ by construction. 
Elements $D\in \mathbb{B}_{C}$ are completely characterized by the set $S_C(D)$ of indices $j$ such that $x_{j+1}=x_j-1$ in~\eqref{eq:def_B_i}.
In particular, this allows us to identify $\mathbb{B}_C$ with a distinguished subset of the set of sequences $(S_1,\dots,S_t)$ where each $S_i\subseteq \{0,\dots,m_i-1\}$. Such sequences in turn may naturally be identified as subsets of $\{0,1,\dots,m_1+\cdots+m_t-1\}$.
In fact, as we shall soon see, this aforementioned association has even further structure; we have an isomorphism between appropriate posets.

 \begin{example}\label{ex:B_C}
 Let $C=442=4^2 \, 2^1$. 
 Then $B_1$ and $B_2$ equal  $\{44,43,33\}$  and $\{2,1\}$ respectively.
 We thus have
 \[
 \mathbb{B}_C=\{442,332,432,441,331,431\}.
 \]
 Recording the $S_C(D)$ as $D$ varies over $\mathbb{B}_C$ gives us the following subsets of $\{0,1,2\}$:
 \[
 \{\emptyset,\{0\},\{1\},\{2\},\{0,2\},\{1,2\}\}.
 \]
 Note that these subsets give a lower order ideal in the Boolean lattice on subsets of $\{0,1,2\}$.
 \end{example}

We are now ready to state the main result in this subsection. It expresses the monomial $\alpx(C)\coloneqq x_{C_1}\cdots x_{C_m}$ in terms of back stable slides. In particular, the expansion is \emph{signed multiplicity free}.
  
\begin{theorem}
\label{thm:monomials_into_slides}
The back stable slide expansion of $\alpx(C)$  is given by
\begin{equation}
\label{eq:monomials_into_backslides}
\alpx(C)=\sum_{D\in \mathbb{B}_{C}} (-1)^{|S_C(D)|}\, \backslide(D).
\end{equation}
\end{theorem}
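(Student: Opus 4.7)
The plan is to compare the coefficient of each monomial $\alpx^\mu := x_{\mu_1} \cdots x_{\mu_m}$, indexed by a weakly decreasing $\mu = (\mu_1 \geq \cdots \geq \mu_m) \in \bZ^m$, on the two sides of~\eqref{eq:monomials_into_backslides}. The left-hand side contributes $1$ precisely when $\mu = C$. On the right, the coefficient of $\alpx^\mu$ in $\backslide(D)$ (for $D$ nonincreasing) is $1$ exactly when $\mu$ is \emph{$D$-compatible}\textemdash{} $\mu_j \leq D_j$ for all $j$ and every descent of $D$ is a descent of $\mu$\textemdash{} and is $0$ otherwise. It therefore suffices to prove
\[
\sum_{\substack{D \in \mathbb{B}_C \\ D\text{ compatible with }\mu}} (-1)^{|S_C(D)|} = [\mu = C].
\]

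Write $C = M_1^{m_1} \cdots M_t^{m_t}$, split $\mu$ into blocks $\mu^i = (\mu^i_1, \ldots, \mu^i_{m_i})$, and set $r^i_j := M_i - \mu^i_j \geq 0$. If $\mu$ fails the forced descent $\mu^i_{m_i} > \mu^{i+1}_1$ at some run boundary of $C$ then no $D \in \mathbb{B}_C$ is compatible with $\mu$ and the identity reads $0 = 0$; assume henceforth $\mu$ respects all run boundaries. Then the three constraints on each $S_i \subseteq \{0,\ldots,m_i-1\}$\textemdash{} that $|S_i \cap \{0,\ldots,j-1\}| \leq r^i_j$, that $S_i \cap \{1,\ldots,m_i-1\} \subseteq \mathrm{Des}(\mu^i)$, and that $|S_i| < M_i - M_{i+1}$ for $i < t$\textemdash{} each involve only $S_i$, so the signed sum factors as $\prod_i T^{(i)}$ with $T^{(i)} := \sum_{S_i}(-1)^{|S_i|}$. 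When $\mu = C$, every $r^i_j$ vanishes, forcing each $S_i = \emptyset$ and the product to equal $1$.

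When $\mu \neq C$, the strategy is to isolate the \emph{largest} index $i^*$ with $\mu^{i^*} \neq C^{i^*}$ and show $T^{(i^*)} = 0$. After discarding forced zeros, $S_{i^*}$ is parameterised by binary choices $(s_{j_0}, \ldots, s_{j_p})$ at the ``active'' positions $A = \{0\} \cup \mathrm{Des}(\mu^{i^*}) = \{j_0 < \cdots < j_p\}$, and the partial sums $\Sigma_\ell = s_{j_0} + \cdots + s_{j_\ell}$ are bounded by a strictly increasing sequence $R_0 < R_1 < \cdots < R_p$ where $R_\ell := r^{i^*}_{j_\ell + 1}$; strict increase comes from the fact that $r^{i^*}$ jumps precisely at the descents of $\mu^{i^*}$. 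The crux is that the $\mathbb{B}_C$-constraint $|S_{i^*}| < M_{i^*} - M_{i^*+1}$ is either absent (if $i^* = t$) or redundant: by maximality of $i^*$ one has $\mu^{i^*+1}_1 = M_{i^*+1}$, so the forced descent $\mu^{i^*}_{m_{i^*}} > \mu^{i^*+1}_1$ yields $r^{i^*}_{m_{i^*}} \leq M_{i^*} - M_{i^*+1} - 1$, making this constraint follow from the partial sum bound at $j = m_{i^*}$. The final active bit $s_{j_p}$ is then a free $\{0,1\}$-choice because $\Sigma_{p-1} \leq R_{p-1} < R_p$, so summing $(-1)^{s_{j_p}}$ collapses $T^{(i^*)}$ to $0$; the degenerate case $p=0$ is handled by $R_0 = r^{i^*}_1 \geq 1$. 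I expect the principal obstacle to be precisely this coupling between runs via the $\mathbb{B}_C$-constraint, and selecting the \emph{last} non-saturated run is what neutralises it.
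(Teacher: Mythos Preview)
Your argument is correct and constitutes a genuinely different proof from the one in the paper. One small quibble: you assert $r^i_j \geq 0$ without justification. This need not hold in general, but if it fails then no $D\in\mathbb{B}_C$ is $\mu$-compatible (since every $D\in\mathbb{B}_C$ satisfies $D_j\leq C_j$), so both sides vanish; you should say this explicitly before restricting to $\mu_j\leq C_j$.

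The paper proceeds structurally rather than by direct coefficient comparison. It introduces a partial order $\leq_m$ on nonincreasing words under which $\backslide(C)=\sum_{D\leq_m C}\alpx(D)$, proves that each $\mathcal{P}_m$ is a \emph{lattice}, and then computes the M\"obius function of any interval $[D,C]$ via the crosscut theorem: the coatom-generated sublattice is exactly $\mathbb{B}_C\cap[D,C]$, which is an upper ideal of a Boolean lattice, whence $\mu(D,C)=(-1)^{|S_C(D)|}$ for $D\in\mathbb{B}_C$ and $0$ otherwise. M\"obius inversion (carried out first in the truncated polynomial setting to keep sums finite, then transported by the shift $\gamma$) gives~\eqref{eq:monomials_into_backslides}. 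Your approach is more elementary---no lattice theory, no crosscut theorem, no M\"obius inversion---and the toggling of the final active bit is essentially a sign-reversing involution. The paper's route, on the other hand, explains \emph{why} $\mathbb{B}_C$ is the relevant index set (it is precisely the support of the M\"obius function below $C$) and situates the identity as an instance of a general inversion principle; your key observation that choosing the \emph{last} nonsaturated block neutralises the coupling constraint $|S_i|<M_i-M_{i+1}$ is a nice substitute for that structural input.
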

We illustrate the theorem with an example.
\begin{example}
Let $C=442$, so that $C=W_\bfc$ with $\bfc=(0,1,0,2,0,\dots)$. 
We already computed $\mathbb{B}_C$ in Example~\ref{ex:B_C}.
Theorem~\ref{thm:monomials_into_slides} then states:
\begin{align*}
x_2x_4^2=\backslide(442)-\backslide(332)-\backslide(432)-\backslide(441)+\backslide(331)+\backslide(431).
\end{align*}
\end{example}

We need some preparation before presenting the proof of Theorem~\ref{thm:monomials_into_slides}.
We will appeal to poset-theoretic terminology freely; the reader is referred to \cite[Chapter 3]{St97} for any undefined jargon.

\begin{definition}
Let $\mathcal{P}_m\subset \mathcal{P}$ be the set of all nonincreasing words of length $m$. For $C,D\in\mathcal{P}$, define $D\leq_{\pa} C$ if and only if $D_i\leq C_i$ for all $i$ and $D_i>D_{i+1}$ whenever $C_i>C_{i+1}$. 
\end{definition}

This makes it clear that $(\mathcal{P}_m,\leq_{\pa})$ is a poset. In fact it is \emph{locally finite}, thus we have the existence of a M\"obius function $\mu$. We recall that it is defined on all $(D,C)$ with $D\leq_{\pa}C$ by $\mu(C,C)=1$ for all $C$, and whenever $D<_{\pa}C$,
\begin{equation}
\label{eq:moebius}
\sum_{D\leq_{\pa}E\leq_{\pa} C}\mu(E,C)=0.
\end{equation}
We illustrate a convex subset of $\mathcal{P}_3$ in Figure~\ref{fig:P3}.

\begin{figure}
\includegraphics[]{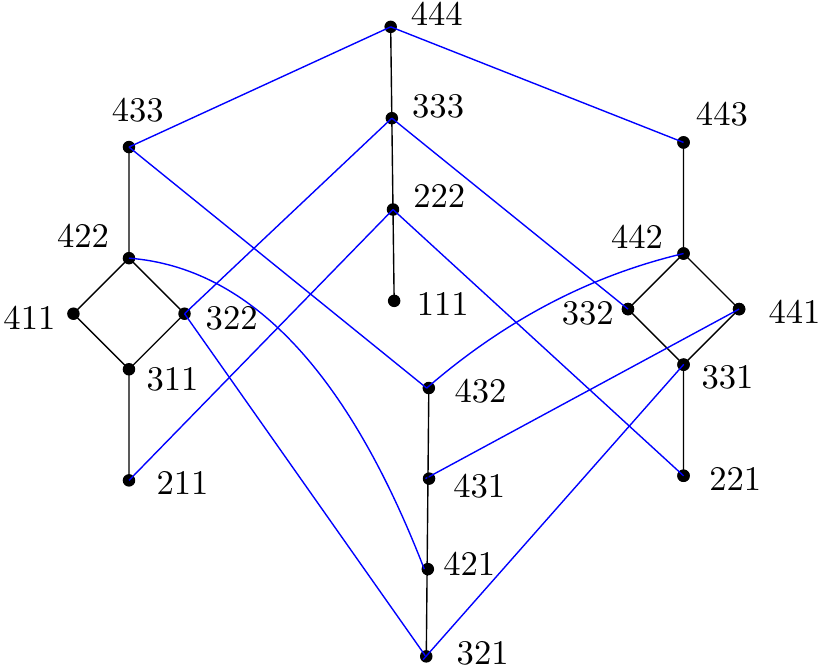}
\caption{\label{fig:P3} Hasse diagram of the convex subset of $\mathcal{P}_3$ formed by all words below $444$ and with positive letters.}
\end{figure}

Toward describing the M\"obius function of $\mathcal{P}_m$, we will first show that it is a lattice, that is, any two elements have a \emph{join} (least upper bound) and a \emph{meet} (greatest lower bound). It is helpful to consider an example of a join of two elements, as that will guide the construction that follows.

Consider $C=555322$ and $D=664421$. Any common upper bound must be component-wise greater than both $C$ and $D$. This leads one to propose $E'=665422$. Now observe that $E'_2>E'_3$ yet $C_2=C_3$, and $E'_3>E'_4$ but $D_3=D_4$. Therefore, whilst $E'$ is component-wise greater than both $C$ and $D$, we do not have $C\leq_{\pa} E'$ and $D\leq_{\pa} E'$. There is an easy fix-- increment $E'_3$ and  $E'_4$ appropriately so that the resulting word does not have strict descents in positions $2$ and $3$. Indeed, check that $E=666622$ meets these criteria and therefore $E=C\vee D$.

\begin{proposition}
\label{prop:lattice}
For any $m\geq 1$, $\mathcal{P}_m$ is a lattice.
\end{proposition}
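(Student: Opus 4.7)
The plan is to exhibit explicit constructions for the join $C \vee D$ and the meet $C \wedge D$ for any $C,D \in \mathcal{P}_m$. The key reformulation of the order that I will exploit is: $D \leq_{\pa} C$ iff $D_i \leq C_i$ componentwise and $\Dsc(C) \subseteq \Dsc(D)$, where $\Dsc(W) = \{i \mid W_i > W_{i+1}\}$. Thus passing upward may only remove descents and increase entries, while passing downward must preserve all descents and can only decrease entries.

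Following the worked example preceding the proposition, I construct the join as follows. Any upper bound $E$ of $C$ and $D$ must satisfy $\Dsc(E) \subseteq \Dsc(C) \cap \Dsc(D)$, so $E$ must be constant on each block of the partition of $\{1,\dots,m\}$ obtained by declaring $i \sim i{+}1$ whenever $C_i = C_{i+1}$ or $D_i = D_{i+1}$ and then taking the transitive closure. Within each such block $B$ the smallest permissible value is $v_B \coloneqq \max_{j \in B} \max(C_j, D_j)$, so I define $E_i = v_B$ when $i \in B$. To verify this yields the join I will check (i) that $E$ is nonincreasing: consecutive blocks $B,B'$ are separated by a position that is a strict descent of both $C$ and $D$, which together with monotonicity of $C$ and $D$ forces $v_B > v_{B'}$; and (ii) that $E$ is dominated by every other upper bound, by the extremality of the chosen constants.

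For the meet I define $E$ recursively by $E_1 = \min(C_1, D_1)$ and
\[
E_{i+1} = \min\bigl(C_{i+1},\, D_{i+1},\, E_i - \epsilon_i\bigr),
\]
where $\epsilon_i = 1$ if $i \in \Dsc(C) \cup \Dsc(D)$ and $\epsilon_i = 0$ otherwise. By construction $E$ is nonincreasing with $\Dsc(C) \cup \Dsc(D) \subseteq \Dsc(E)$, so $E \leq_{\pa} C$ and $E \leq_{\pa} D$. For maximality, given any common lower bound $F$, induction on $i$ gives $F_i \leq E_i$: the base case is $F_1 \leq \min(C_1, D_1) = E_1$, and for the step one uses that $F$ has a strict descent at $i$ whenever $\epsilon_i = 1$, combined with $F_{i+1} \leq \min(C_{i+1}, D_{i+1})$, to obtain $F_{i+1} \leq \min(C_{i+1}, D_{i+1}, F_i - \epsilon_i) \leq E_{i+1}$. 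Together with $\Dsc(E) \subseteq \Dsc(C) \cup \Dsc(D) \subseteq \Dsc(F)$, this yields $F \leq_{\pa} E$.

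The main conceptual hurdle is the asymmetry between join and meet: whereas the join is determined locally within each block with no cascade, the meet involves descent constraints that propagate forward, so a single forced descent can drive later entries arbitrarily negative. This is why the meet requires a recursive definition rather than a componentwise formula; the ensuing induction for maximality is then the principal technical checkpoint, but it is routine once the recursion is in place.
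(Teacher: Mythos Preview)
Your argument is correct. Your join construction is the same as the paper's, only phrased in the language of descent sets and blocks rather than via the index set $J$: your blocks are precisely the maximal intervals between consecutive elements of the paper's $J$, and since $C,D$ are nonincreasing the value $v_B=\max_{j\in B}\max(C_j,D_j)$ coincides with $E'_{j_k}=\max(C_{j_k},D_{j_k})$ at the left endpoint $j_k$ of the block.

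The genuine difference is in the meet. The paper does \emph{not} construct it explicitly; instead it observes that any two elements have a common lower bound (for instance $k(k-1)\cdots(k-m+1)$ with $k$ small enough), and then invokes the standard lattice-theoretic fact that a poset with all joins and a lower bound for every pair automatically has meets: one takes $C\wedge D$ to be the join of the finite set of elements lying between this lower bound and both $C$ and $D$. Your recursive formula $E_{i+1}=\min(C_{i+1},D_{i+1},E_i-\epsilon_i)$ is an explicit alternative; it gives the meet directly and makes visible the ``cascade'' phenomenon you mention, at the cost of the small induction you carry out. The paper's route is shorter and avoids that induction, but yields no formula; yours is more informative and would, for instance, let one read off $\Dsc(C\wedge D)=\Dsc(C)\cup\Dsc(D)$ immediately. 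One small point: your sketch for the join should also note that $\Dsc(E)=\Dsc(C)\cap\Dsc(D)$ (which you effectively establish via $v_B>v_{B'}$), since this is what gives $\Dsc(G)\subseteq\Dsc(E)$ for any upper bound $G$ and hence $E\leq_{\pa} G$.
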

\begin{proof}
Let $C,D$ be two elements of $\mathcal{P}_m$. We will first prove the existence of their join by generalizing the construction above.
 Define $E'\in \mathcal{P}_m$ by $E'_i=\max(C_i,D_i)$. 
Let $J$ be the set consisting of $1$  and all $i\in \{2,\ldots,m\}$ such that $C_{i-1}>C_{i}$ and $D_{i-1}>D_{i}$. So, for instance, in the example leading to this proposition, we would have $J=\{1,5\}$. 
Now define $E\in\mathcal{P}_m$ by setting $E_i=E'_{j_k}$ where $j_k$ the the maximal element of $J$ such that $j_k\leq i$. 

We claim that $E$ is the join of $C$ and $D$. Let us first check that $E$ is an upper bound for $C$ and $D$.
Indeed we have $E_i\geq E'_i$ and $E'_i\geq C_i,D_i$, so $E$ is component-wise greater than $C$ and $D$. Second, $E_{i-1}>E_{i}$ implies that $i\in J$ by construction, which by definition entails $C_{i-1}>C_{i}$ and $D_{i-1}>D_{i}$. Thus we infer that $E\geq_{\pa} C,D$.

Now let $G$ satisfy $G\geq_{\pa} C,D$. One thus has clearly that $G_i\geq E'_i$ for all $i$, and since $i\notin J$ implies that $G_{i-1}=G_i$, it follows that in fact $G_i\geq E_i$ for all $i$. Now if $G_{i-1}>G_{i}$, one necessarily  has $i\in J$. We have thus $E_i=E'_i=\max(C_i,D_i)$ and $E_{i-1}\geq E'_{i-1}=\max(C_{i-1},D_{i-1})>\max(C_i,D_i)=E_i$ where the last inequality follows from $C_{i-1}>C_{i}$ and $D_{i-1}>D_{i}$. This shows that $G\geq_{\pa} E$, which completes the proof that any two elements have a join.

We now need to show that any two elements have a meet. This could be done explicitly as above. We will rather adapt the abstract argument of \cite[Proposition 3.3.1]{St97} used there in the case of a finite bounded poset.

Let $C,D \in \mathcal{P}_m$. Note first that $C,D$ always have a common lower bound $L$: for instance, let $k$ be the minimal value of all letters occurring in $C$ and $D$, and pick $L=k(k-1)\cdots (k-m+1)$. Consider the set $\mathcal{X}$ of all elements of $\mathcal{P}_m$ that lie below $C$ and $D$ and above $L$. 
Now $\mathcal{X}$ is finite and contains $L$, so by the first half of the proof we can define the join $M\coloneqq \bigvee \mathcal{X}$, i.e. the join of all elements in $\mathcal{X}$.

By construction $M\leq_{m}C,D$. We claim that it is in fact the meet of $C$ and $D$. Indeed, let $M'$ be any element below $C$ and $D$. Then $M'\leq_{m} M'\vee L$, and this join is in $\mathcal{X}$. It follows that $M'\leq_{m} M$ as wanted. We have thus shown that any two elements admit a join and a meet, and so $\mathcal{P}_m$ is a~lattice.
\end{proof}
We  now compute the M\"obius function explicitly:
\begin{proposition}
\label{prop:moebius_computation}
Let $C,D\in\mathcal{P}_m$ with $D\leq_{\pa} C$.
We have
\[
\mu(D,C)=\left\lbrace\begin{array}{ll}
(-1)^{|S_C(D)|} &\text{ if } D\in\mathbb{B}_C\\
0 &\text{ if } D\notin\mathbb{B}_C.
\end{array}\right.
\]
\end{proposition}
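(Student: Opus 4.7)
The plan is to verify that the proposed function
\[
\nu(D,C) \;=\; \begin{cases} (-1)^{|S_C(D)|} & D\in\mathbb{B}_C, \\ 0 & D\notin \mathbb{B}_C,\end{cases}
\]
satisfies the defining M\"obius recursion in the locally finite lattice $\mathcal{P}_m$. Since $\nu(C,C)=1$, the content is to show that for every $D<_{\pa}C$,
\[
\sum_{E\in[D,C]\cap\mathbb{B}_C} (-1)^{|S_C(E)|} \;=\; 0.
\]

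The first step is a block factorization. Writing $C=M_1^{m_1}\cdots M_t^{m_t}$, every $E\in[D,C]$ inherits the strict descents of $C$ at the block boundaries and so decomposes as $E=E^{(1)}\cdots E^{(t)}$ with $E^{(i)}$ of length $m_i$; the same holds for $D$. Membership in $\mathbb{B}_C$ is block-wise by definition; the componentwise inequalities $D\le E$ are local; block-boundary descents of $E$ are automatically descents of $D$ (since $D\le_{\pa} C$); and an interior-block descent of $E^{(i)}$ at slot $j$ translates into a within-block descent of $D^{(i)}$ at slot $j$. Hence $[D,C]\cap\mathbb{B}_C=\prod_i([D^{(i)},M_i^{m_i}]\cap B_i)$ and the sum factors as $\prod_i\sigma_i$, where $\sigma_i=\sum_{E^{(i)}\in[D^{(i)},M_i^{m_i}]\cap B_i}(-1)^{|S_i|}$. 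If $D^{(i)}=M_i^{m_i}$, the componentwise constraint forces $S_i=\varnothing$ and $\sigma_i=1$, so it is enough to exhibit one block with $\sigma_i=0$.

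Next, I pick $i^*$ to be the \emph{largest} index such that $D^{(i^*)}\ne M_{i^*}^{m_{i^*}}$; such $i^*$ exists since $D\ne C$. For $i>i^*$ we have $D^{(i)}=M_i^{m_i}$, so in particular $D^{(i^*+1)}_1=M_{i^*+1}$ (or $i^*=t$), and the block-boundary strict descent $D^{(i^*)}_{m_{i^*}}>D^{(i^*+1)}_1$ inherited from $D\le_{\pa} C$ sharpens to $D^{(i^*)}_{m_{i^*}}>M_{i^*+1}$. Setting $k^*=\min\{k:D^{(i^*)}_k<M_{i^*}\}$, I propose the sign-reversing involution on $[D^{(i^*)},M_{i^*}^{m_{i^*}}]\cap B_{i^*}$ that toggles whether $k^*-1\in S_{i^*}$. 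Removing only loosens the constraints. For addition, the constraints to check are (i)~the componentwise bound $|S_{i^*}\cap[0,j-1]|\le M_{i^*}-D^{(i^*)}_j$, (ii)~the within-block strictness ``$j\in S_{i^*}$ with $j\ge 1$ implies $D^{(i^*)}_j>D^{(i^*)}_{j+1}$'', and (iii)~the cardinality bound $|S_{i^*}|<M_{i^*}-M_{i^*+1}$ from the definition of $B_{i^*}$.

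Conditions (i) and (ii) are straightforward. The componentwise bound at $j=k^*-1$ forces $S_{i^*}\cap[0,k^*-2]=\varnothing$; combining this with a telescope across the strict descents of $D^{(i^*)}$ guaranteed by (ii) yields
\[
|S_{i^*}\cap[k^*,j-1]| \;\le\; D^{(i^*)}_{k^*}-D^{(i^*)}_j \;\le\; M_{i^*}-1-D^{(i^*)}_j
\]
for all $j\ge k^*$, so adding $k^*-1$ preserves (i); condition (ii) for the new element $k^*-1$ (when $k^*\ge 2$) reduces to $M_{i^*}>D^{(i^*)}_{k^*}$, which defines $k^*$. The main obstacle is (iii): a priori $|S_{i^*}|$ could equal $M_{i^*}-M_{i^*+1}-1$, in which case addition would push it out of $B_{i^*}$. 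The choice of $i^*$ as the \emph{last} deviating block is exactly what resolves this: specializing the displayed bound to $j=m_{i^*}$ and using the sharper $D^{(i^*)}_{m_{i^*}}\ge M_{i^*+1}+1$ obtained earlier forces $|S_{i^*}|\le M_{i^*}-M_{i^*+1}-2$ whenever $k^*-1\notin S_{i^*}$, so the toggle stays strictly inside $B_{i^*}$. This yields $\sigma_{i^*}=0$, and hence the whole product vanishes.
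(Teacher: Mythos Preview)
Your argument is correct. Verifying the M\"obius recursion directly via a block factorization and a sign-reversing involution works, and the choice of $i^*$ as the \emph{last} deviating block is exactly what makes the cardinality bound (iii) go through, as you note. One small remark: although you write ``locally finite lattice $\mathcal{P}_m$'', your argument never actually uses the lattice property, only local finiteness.

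This is a genuinely different route from the paper's. The paper first establishes that $\mathcal{P}_m$ is a lattice (their Proposition~\ref{prop:lattice}) and then applies Rota's crosscut theorem to the interval $[D,C]$: the sublattice generated by the coatoms of $[D,C]$ is identified with $\mathbb{B}_C\cap[D,C]$, which forces $\mu(D,C)=0$ when $D\notin\mathbb{B}_C$; when $D\in\mathbb{B}_C$, the map $E\mapsto S_C(E)$ identifies $\mathbb{B}_C$ with an upper ideal of a Boolean lattice and the value $(-1)^{|S_C(D)|}$ drops out. Their proof is shorter and more conceptual but imports two pieces of machinery (the lattice structure and the crosscut theorem). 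Your proof is more elementary and self-contained: it avoids both, at the cost of a slightly more intricate combinatorial verification. In effect, your sign-reversing involution is doing by hand what the crosscut theorem packages abstractly.
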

\begin{proof}
We apply the crosscut theorem~\cite[Corollary 3.9.4]{St97} to the interval $[D,C]$, which is a lattice by Proposition~\ref{prop:lattice}. This says that $\mu(D,C)=\sum_k(-1)^kN_k$ where $N_k$ is the number of $k$-subsets of coatoms of $[D,C]$ whose meet is $D$. Now it is easy to see that the sublattice generated by the coatoms of $[D,C]$ is precisely $\mathbb{B}_C\cap [D,C]$. So $\mu(D,C)=0$ when $D\notin \mathbb{B}_C$.

Now we notice that the poset induced by the elements of $\mathbb{B}_C$ is an upper ideal of a boolean lattice. This follows for instance by our identification of elements $D\in \mathbb{B}_C$ with subsets $S_C(D)$: one has $D\leq_{\pa} E$ for $D,E$ in $\mathbb{B}_C$ if and only $S_C(E)\subseteq S_C(D)$. This then suffices to conclude since our $\mu(D,C)$ coincides with the classical M\"obius function of the boolean lattice, which is $(-1)^{|S_C(D)|}$ in our notations.
\end{proof}

\begin{proof}[Proof of Theorem~\ref{thm:monomials_into_slides}]
Let $C\in \mathcal{P}_m$.
By definition we have
\begin{align}
\label{eq:beginning}
\backslide(C)=\sum_{D\leq_{\pa} C} \alpx(D).
\end{align}
We want to apply M\"obius inversion to~\eqref{eq:beginning}, but the sum on the right is infinite. We thus restrict to the polynomial case temporarily:
\begin{align}
\slide(C)=\sum_{D\leq_{\pa} C, D>0} \alpx(D).\end{align}
Here we take $D>0$ to mean that all components of $D$ are strictly positive.
By M\"obius inversion we get:
\begin{align}
\alpx(C)=\sum_{D\leq_{\pa} C, D>0}\mu(D,C)\,\slide(D).
\end{align}
Note that this indeed the same M\"obius function of the full poset $\mathcal{P}_m$ since imposing $D>0$ 
gives us an upper ideal, thereby preserving M\"obius values. 
By Proposition~\ref{prop:moebius_computation}, we obtain: 
\begin{equation}
\label{eq:monomials_into_slides}
\alpx(C)=\sum_{D\in \mathbb{B}_C, D>0}(-1)^{|S_C(D)|}\slide(D).
\end{equation}
This gives us the polynomial case. 

Now for $i>0$, apply \eqref{eq:monomials_into_slides} to $\gamma^i(C)=(C_1+i)\cdots (C_m+i)$, and then shift the resulting expansion by $\gamma^{-i}:x_k\mapsto x_{k-i}$. By passing to the limit in the result, we get \eqref{eq:monomials_into_backslides} (for $C>0$) since $\backslide(C)$ is the limit of $\gamma^{-i}\slide(\gamma^i(C))$ when $i$ tends to infinity. For general $C$, one has $\gamma^i(C)>0$ for $i$ large enough, and then one concludes using $\backslide(C)=\gamma^{-i}\backslide(\gamma^i(C))$.
\end{proof}

\section*{Acknowledgements}
We are grateful to Darij Grinberg for helpful comments on both content and~exposition.
\bibliographystyle{alpha}
\bibliography{Biblio_flagged}
\end{document}